\documentclass[11pt]{article}

\usepackage[margin=1.05in]{geometry}
\usepackage{amsmath,amssymb,amsthm,mathtools}
\usepackage{microtype}
\usepackage{enumitem}
\usepackage{booktabs}
\usepackage{hyperref}
\usepackage[nameinlink,capitalize,noabbrev]{cleveref}

\hypersetup{colorlinks=true,linkcolor=blue,citecolor=blue,urlcolor=blue}
\setlist[itemize]{leftmargin=2em,itemsep=0.25em,topsep=0.35em}
\setlist[enumerate]{leftmargin=2em,itemsep=0.25em,topsep=0.35em}

\newtheorem{theorem}{Theorem}[section]
\newtheorem{proposition}[theorem]{Proposition}
\newtheorem{corollary}[theorem]{Corollary}

\theoremstyle{definition}
\newtheorem{definition}[theorem]{Definition}
\newtheorem{conjecture}[theorem]{Conjecture}
\theoremstyle{remark}
\newtheorem{remark}[theorem]{Remark}

\crefname{theorem}{Theorem}{Theorems}
\Crefname{theorem}{Theorem}{Theorems}
\crefname{proposition}{Proposition}{Propositions}
\Crefname{proposition}{Proposition}{Propositions}
\crefname{corollary}{Corollary}{Corollaries}
\Crefname{corollary}{Corollary}{Corollaries}
\crefname{definition}{Definition}{Definitions}
\Crefname{definition}{Definition}{Definitions}
\crefname{conjecture}{Conjecture}{Conjectures}
\Crefname{conjecture}{Conjecture}{Conjectures}
\crefname{remark}{Remark}{Remarks}
\Crefname{remark}{Remark}{Remarks}

\newcommand{\Sph}{\mathbb S}
\newcommand{\R}{\mathbb R}
\newcommand{\K}{\mathcal K}
\newcommand{\Harm}{\mathcal H}

\newcommand{\MA}{\mathrm{MA}}
\newcommand{\Area}{\operatorname{Area}}
\newcommand{\Vol}{\operatorname{Vol}}
\newcommand{\Lip}{\operatorname{Lip}}
\newcommand{\tr}{\operatorname{tr}}

\newcommand{\eps}{\varepsilon}

\title{\textbf{Minkowski Polytopes of Spherical Designs:}\\[3pt]
\textbf{High-Order Isotropy and Quantitative Sphericity}}
\author{Congpei An\\[4pt]
\small 
\small \\[2pt]
\small  \href{mailto:andbachcp@gmail.com}{andbachcp@gmail.com}\\
\small }
\date{}

\begin{document}
\maketitle

\begin{abstract}
Let $X_N=\{x_1,\dots,x_N\}\subset \Sph^2$ be a spherical $t$-design of strength $t\ge2$, and let $\mu_X$ be its empirical measure. Minkowski's theorem associates with $X_N$ a convex polytope $P_X\subset\R^3$, unique up to translation, whose facet normals are the design nodes and whose facets all have area $4\pi/N$; equivalently, $S_{P_X}=4\pi\mu_X$. We show that this realization transfers polynomial exactness into exact convex geometry: the normalized surface tensors of $P_X$ agree with those of the unit ball through order $t$, and mixed volumes are exact against convex bodies whose support functions are spherical polynomials of degree at most $t$.

We next derive quantitative shape information. Spherical Jackson approximation yields a $1$-Wasserstein discrepancy $W_1(\mu_X,\sigma)=O(t^{-1})$, while degree-two exactness gives a uniform nondegeneracy condition. Combined with quantitative inverse stability for Minkowski's problem, this implies, after Steiner normalization,
\[
d_H(P_X,B)=O(t^{-1/2}),\qquad \alpha(P_X,B)=O(t^{-3/4}),
\]
for every spherical $t$-design, without assumptions on cardinality, separation, covering radius, or spectral conditioning. Projection bodies retain the full $O(t^{-1})$ scale, separating linear surface-area observables from nonlinear reconstruction of the body. In the critical regime $N=(t+1)^2$, a uniform spectral lower bound for the sampling Gram matrix further forces the facet normals to be separated at the wavelength scale $t^{-1}$. The construction extends to $\Sph^d$, with the universal Hausdorff rate $O(t^{-1/d})$.
\end{abstract}

\medskip
\noindent\textbf{Keywords.} Spherical design; Minkowski polytope; surface area measure; high-order isotropy; mixed volume; Wasserstein distance; quantitative sphericity; projection body; spectral conditioning.

\noindent\textbf{MSC 2020.} 52A20, 52A21, 52A27, 52A40, 41A55.

\section{Introduction}

A spherical $t$-design is a finite set of points on a sphere whose equal-weight average reproduces spherical integration for all polynomials of degree at most $t$. Since their introduction by Delsarte, Goethals, and Seidel~\cite{DGS1977}, spherical designs have been studied in algebraic combinatorics, approximation and cubature, energy and covering problems, and numerical construction; see, for example, Bannai--Bannai~\cite{BannaiBannai2009}, Sloan--Womersley~\cite{SloanWomersley2009}, and An--Chen--Sloan--Womersley~\cite{An2010}. Existence for arbitrary strength goes back to Seymour and Zaslavsky~\cite{SeymourZaslavsky1984}, while Bondarenko, Radchenko, and Viazovska proved the optimal-order existence result $N=O(t^d)$ on $\Sph^d$~\cite{BRV2013}.

The purpose of this paper is to extract a different consequence of the design equations: their convex geometry. Let
\[
\mu_X=\frac1N\sum_{j=1}^N\delta_{x_j},
\qquad
\sigma=\frac{\omega}{4\pi},
\]
where $\omega$ denotes surface area measure on $\Sph^2$. For a spherical $t$-design with $t\ge2$, exactness in degrees one and two gives
\begin{equation}\label{eq:intro-moments}
\int_{\Sph^2}u\,d\mu_X(u)=0,
\qquad
\int_{\Sph^2}uu^T\,d\mu_X(u)=\frac13 I_3.
\end{equation}
The first identity is the balance condition in Minkowski's existence theorem, and the second prevents concentration on a great circle. Consequently there is a convex polytope $P_X\subset\R^3$, unique up to translation, such that
\begin{equation}\label{eq:intro-surface-measure}
S_{P_X}=\frac{4\pi}{N}\sum_{j=1}^N\delta_{x_j}=4\pi\mu_X.
\end{equation}
Thus the nodes of the design are precisely the outer facet normals of an equal-facet-area polytope.

The existence statement in \eqref{eq:intro-surface-measure} is an immediate application of the classical Minkowski theorem. The main point of the paper is what the remaining design equations force after this realization. The normalized surface-area measure of $P_X$ agrees with that of the ball on every polynomial of degree at most $t$. Hence the algebraic exactness of a design becomes a hierarchy of exact geometric identities for the associated convex body.

Our first main result makes this principle explicit. If $M_k(K)$ denotes the normalized $k$th surface tensor of a convex body, then
\begin{equation}\label{eq:intro-isotropy}
M_k(P_X)=M_k(B),\qquad 0\le k\le t.
\end{equation}
Thus the design strength measures the order to which the normal geometry of $P_X$ is isotropic. The same mechanism applies to mixed volumes: whenever the support function of a convex body $L$ belongs to $\Pi_t(\Sph^2)$,
\[
V(P_X,P_X,L)=V(B,B,L).
\]
This viewpoint is related to the use of moments and surface tensors to encode convex bodies; see Kousholt--Kiderlen~\cite{KousholtKiderlen2016}, Kousholt~\cite{Kousholt2017}, and Kousholt--Schulte~\cite{KousholtSchulte2021}. Here, however, the moment data arise simultaneously and exactly from the design equations and determine a distinguished equal-area polytope.

The second part of the paper is quantitative. By combining polynomial exactness with spherical Jackson approximation and Kantorovich--Rubinstein duality, we prove
\begin{equation}\label{eq:intro-w1}
W_1(\mu_X,\sigma)\le \frac{C}{t+1}.
\end{equation}
At the same time, \eqref{eq:intro-moments} implies the uniform dispersion estimate
\[
\Theta(\mu_X):=\inf_{\theta\in\Sph^2}\int_{\Sph^2}|\theta\cdot u|\,d\mu_X(u)\ge\frac13.
\]
Thus every spherical design lies in a uniformly nondegenerate class of surface-area measures. We then invoke the recent quantitative inverse-stability theorem of K.~J. B\"or\"oczky, J.~M. Machado, and J.~P.~G. Ramos, \emph{Quantitative stability for Minkowski's problem}, arXiv:2603.17726v3 (2026), to obtain
\begin{equation}\label{eq:intro-hausdorff}
\inf_{a\in\R^3}d_H(P_X,a+B)\le C(t+1)^{-1/2},
\end{equation}
and hence, after fixing translations by the Steiner point,
\[
d_H(P_X,B)\le C(t+1)^{-1/2}.
\]
The estimate is universal: it requires no assumption on cardinality, separation, covering radius, mesh ratio, or spectral conditioning. The external inverse-stability theorem is an input; the contribution here is to show that the two hypotheses relevant for its application---an $O(t^{-1})$ discrepancy and a uniform nondegeneracy bound---are consequences of design exactness itself.

The passage from \eqref{eq:intro-w1} to \eqref{eq:intro-hausdorff} also reveals a useful distinction. The surface-area data are already approximated at the scale $t^{-1}$, whereas reconstruction of the body through the inverse Minkowski map yields the universal scale $t^{-1/2}$ in dimension three. For projection bodies the dependence on the surface-area measure is linear, and the full rate is retained:
\begin{equation}\label{eq:intro-projection}
d_H(\Pi P_X,\pi B)\le C(t+1)^{-1}.
\end{equation}
Thus the two rates separate approximation of linear surface-area observables from nonlinear shape reconstruction.

A third theme concerns local geometry. Polynomial exactness alone does not control the minimum separation of the nodes. In the critical interpolation regime $N=(t+1)^2$, however, a uniform lower spectral bound for the normalized sampling Gram matrix implies
\[
\min_{i\ne j}d_{\Sph^2}(x_i,x_j)\gtrsim t^{-1}.
\]
For the Minkowski polytope this says that the facet normals are separated at the natural wavelength scale. Algebraic exactness and spectral conditioning therefore play different roles: the former controls universal global sphericity, while the latter supplies finer local regularity. This leads naturally to the question of whether well-conditioned critical designs enjoy a shape rate strictly better than the universal exponent $1/2$.

The surface-area measure also admits an Aleksandrov Minkowski--Monge--Amp\`ere interpretation. We use it only as a geometric language for harmonic shape modes. Linearization at the ball gives the operator $\Delta_{\Sph^2}+2$, whose kernel consists of translations and whose degree-two modes describe infinitesimal ellipsoidal anisotropy. The global results, however, are obtained from harmonic approximation and convex-geometric stability rather than from local PDE analysis.

In summary, the paper develops the chain
\[
\text{spherical harmonic exactness}
\longrightarrow
\text{exact surface-area geometry}
\longrightarrow
\text{quantitative sphericity},
\]
with spectral conditioning providing a separate local refinement. The same mechanism extends to $\Sph^d\subset\R^{d+1}$ and yields the universal rate $d_H(P_X,B^{d+1})=O(t^{-1/d})$ after normalization.

The paper is organized as follows. \Cref{sec:prelim} fixes notation. \Cref{sec:realization} establishes the equal-area Minkowski realization. \Cref{sec:isotropy} develops the exact surface-tensor and mixed-volume identities. \Cref{sec:quantitative} proves the Wasserstein and universal shape estimates, and \Cref{sec:projection} treats projection bodies. \Cref{sec:modes} records the complementary harmonic shape-mode interpretation. \Cref{sec:affine} gives the affine covariance statement. \Cref{sec:spectral} studies spectral conditioning and wavelength-scale separation. \Cref{sec:higher} contains the higher-dimensional extension and open problems.

\section{Preliminaries}\label{sec:prelim}

\subsection{Spherical designs}
Let $\omega$ denote the standard surface area measure on $\Sph^2$ and let
\[
\sigma:=\frac{\omega}{4\pi}
\]
be normalized area measure. Let $\Pi_t(\Sph^2)$ denote the restrictions to $\Sph^2$ of polynomials in three variables of total degree at most $t$.

\begin{definition}[Spherical design]\label{def:design}
A finite set $X_N=\{x_1,\dots,x_N\}\subset\Sph^2$ of distinct points is a spherical $t$-design if
\begin{equation}\label{eq:design}
\frac1N\sum_{j=1}^N p(x_j)=\int_{\Sph^2}p(u)\,d\sigma(u)
\qquad\text{for every }p\in\Pi_t(\Sph^2).
\end{equation}
Its empirical measure is
\[
\mu_X:=\frac1N\sum_{j=1}^N\delta_{x_j}.
\]
\end{definition}

Let $\Harm_\ell$ be the space of spherical harmonics of degree $\ell$. The classical harmonic characterization of a spherical design is
\begin{equation}\label{eq:harmonic-design}
\int_{\Sph^2}Y\,d\mu_X=0
\qquad\text{for every }Y\in\Harm_\ell,\quad 1\le\ell\le t.
\end{equation}
See Delsarte--Goethals--Seidel~\cite{DGS1977}.

Two moment identities will be used repeatedly. If $t\ge1$, then
\begin{equation}\label{eq:first-moment}
\int_{\Sph^2}u\,d\mu_X(u)=0.
\end{equation}
If $t\ge2$, then
\begin{equation}\label{eq:second-moment}
\int_{\Sph^2}uu^T\,d\mu_X(u)=\frac13 I_3.
\end{equation}
The second formula follows from rotation invariance and $\tr(uu^T)=1$.

\subsection{Convex bodies and surface area measures}
Let $\K^3$ be the family of nonempty compact convex subsets of $\R^3$, and let $\K^3_\circ$ denote the convex bodies with nonempty interior. The support function of $K\in\K^3$ is
\[
h_K(u):=\max_{x\in K}x\cdot u,\qquad u\in\Sph^2.
\]
Translations act by
\[
h_{K+a}(u)=h_K(u)+a\cdot u.
\]
The Hausdorff metric satisfies
\begin{equation}\label{eq:hausdorff-support}
d_H(K,L)=\|h_K-h_L\|_{L^\infty(\Sph^2)}.
\end{equation}

The surface area measure $S_K$ is the push-forward of surface area on $\partial K$ under the outer unit normal map. For a polytope $P$ with facets $F_1,\dots,F_M$ and corresponding outer unit normals $u_1,\dots,u_M$,
\begin{equation}\label{eq:polytope-SA}
S_P=\sum_{j=1}^M\Area(F_j)\delta_{u_j}.
\end{equation}
Its total mass is
\[
S_K(\Sph^2)=\Area(\partial K).
\]

We use the following classical form of Minkowski's existence and uniqueness theorem; see Schneider~\cite[Section~8.2]{Schneider2014}.

\begin{theorem}[Minkowski]\label{thm:minkowski}
Let $\nu$ be a finite nonzero Borel measure on $\Sph^2$. There exists a convex body $K\in\K^3_\circ$ satisfying $S_K=\nu$ if and only if
\begin{equation}\label{eq:minkowski-balance}
\int_{\Sph^2}u\,d\nu(u)=0
\end{equation}
and $\nu$ is not concentrated on any great circle. The body $K$ is unique up to translation. If $\nu$ has finite support, then $K$ is a polytope.
\end{theorem}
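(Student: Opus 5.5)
Since the theorem is quoted from Schneider~\cite[Section~8.2]{Schneider2014}, I would only sketch the classical argument. I treat necessity and uniqueness first, then existence for finitely supported measures, and finally the general case by approximation.

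\emph{Necessity.} For any convex body $K$ and vector $a$, translation invariance of volume gives
\[
\int_{\Sph^2} a\cdot u\,dS_K(u)=0 .
\]
For a polytope this is the divergence theorem applied to a constant field (the facets' vector areas sum to zero), and general bodies follow by approximation. If $S_K$ were concentrated on a great circle $\theta^\perp\cap\Sph^2$, then $\int|\theta\cdot u|\,dS_K(u)=0$. But this integral is twice the area of the projection of $K$ onto $\theta^\perp$, which is positive for $K\in\K^3_\circ$.

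\emph{Uniqueness.} I would use Minkowski's first inequality $V(K,K,L)^3\ge V(K)^2V(L)$, with equality only for homothetic bodies. If $S_K=S_L$, then $V(K,K,L)=\frac13\int h_L\,dS_K=V(L)$ and symmetrically $V(L,L,K)=V(K)$. Combining the two inequalities forces $V(K)=V(L)$ and equality, so $K$ and $L$ are homothetic with the same volume, hence translates.

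\emph{Existence for $\nu=\sum_{j=1}^M a_j\delta_{u_j}$.} I would use the variational method. Consider the polytopes
\[
P(h)=\{x:\ x\cdot u_j\le h_j\ \text{for all } j\},\qquad h\in\R^M_{\ge0}.
\]
Maximize $V(P(h))$ subject to $\sum_j a_jh_j=1$. Three facts are needed:
\begin{enumerate}
\item Because $\nu$ is not concentrated on a great circle, the $u_j$ positively span $\R^3$. Hence $P(h)$ is bounded and the feasible set can be taken compact after translating so that $0\in P(h)$. This is where balancing enters: translation changes $\sum a_jh_j$ by $\sum a_j u_j\cdot x=0$, so the constraint is translation invariant.
\item The maximum is positive, so the maximizer has interior.
\item $V(P(h))$ is differentiable with $\partial V/\partial h_j=\Area(F_j)$, the area of the (possibly degenerate) facet with normal $u_j$.
\end{enumerate}
The Lagrange condition then gives $\Area(F_j)=\lambda a_j$ for every $j$, and rescaling by $\lambda^{-1/2}$ yields $S_P=\nu$. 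This also shows $K$ is a polytope when $\nu$ has finite support.

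The main obstacle is the differentiability in step 3, including indices $j$ where the facet is empty or lower dimensional. There I would show that $V$ is only one-sidedly differentiable with derivative $\Area(F_j)\ge0$. I would also argue that a maximizer cannot have $\Area(F_j)=0$ with $a_j>0$: decreasing $h_j$ slightly and rescaling would increase the volume.

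\emph{General $\nu$.} Approximate $\nu$ weakly by balanced discrete measures that are uniformly not concentrated on great circles. Blaschke selection, together with a diameter bound in terms of $\nu(\Sph^2)$ and $\inf_\theta\int|\theta\cdot u|\,d\nu$, gives a convergent subsequence. Weak continuity of $K\mapsto S_K$ then identifies the limit as a body with $S_K=\nu$.
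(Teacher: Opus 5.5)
Your sketch is correct and is essentially the classical argument of Schneider's Sections 8.1--8.2, which is exactly what the paper cites instead of giving its own proof. It has the same three parts: uniqueness from the equality case of Minkowski's first inequality; existence for discrete $\nu$ by maximizing $V(P(h))$ under the translation-invariant constraint $\sum_j a_jh_j=1$ and reading off $\Area(F_j)=\lambda a_j$ with $\lambda=3V(P)>0$; and the general case by balanced discrete approximation, Blaschke selection, and weak continuity of $K\mapsto S_K$. Two small points to tidy: positive spanning of the $u_j$ (hence boundedness of $P(h)$) needs the balance condition together with non-concentration, and in the limit step you should note that the limit body is full-dimensional, since $S_K=\nu\neq 0$ and a body of dimension at most $2$ has surface area measure supported on at most two antipodal points, which lie on a great circle.
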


\subsection{Mixed volumes and projection functions}
For convex bodies $K,L\in\K^3$, the first variation of volume is
\[
\Vol(K+sL)=\Vol(K)+3sV(K,K,L)+O(s^2),\qquad s\downarrow0.
\]
The mixed volume has the integral representation
\begin{equation}\label{eq:mixed-volume}
V(K,K,L)=\frac13\int_{\Sph^2}h_L(u)\,dS_K(u).
\end{equation}
The brightness or projection function of $K$ is
\[
b_K(v):=\Area(K|v^\perp),\qquad v\in\Sph^2.
\]
Cauchy's projection formula is
\begin{equation}\label{eq:cauchy}
b_K(v)=\frac12\int_{\Sph^2}|u\cdot v|\,dS_K(u).
\end{equation}
The projection body $\Pi K$ is the origin-symmetric convex body with support function
\begin{equation}\label{eq:projection-body}
h_{\Pi K}(v)=b_K(v).
\end{equation}
See Gardner~\cite{Gardner2006} or Hug--Weil~\cite{HugWeil2020}.

\section{Equal-area Minkowski realizations}\label{sec:realization}

We first isolate the basic geometric correspondence. It is this realization, rather than the Monge--Amp\`ere formulation, that serves as the structural foundation of the paper.

\begin{theorem}[Minkowski realization of a spherical design]\label{thm:realization}
Let $X_N=\{x_1,\dots,x_N\}\subset\Sph^2$ be a spherical $t$-design with $t\ge2$. Then there exists a convex polytope $P_X\in\K^3_\circ$, unique up to translation, such that
\begin{equation}\label{eq:realization}
S_{P_X}=4\pi\mu_X=\frac{4\pi}{N}\sum_{j=1}^N\delta_{x_j}.
\end{equation}
Consequently, $P_X$ has exactly $N$ facets, the outer unit normal of the $j$th facet is $x_j$, every facet has area $4\pi/N$, and
\begin{equation}\label{eq:surface-total}
\Area(\partial P_X)=4\pi.
\end{equation}
\end{theorem}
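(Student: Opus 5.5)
The plan is to apply \cref{thm:minkowski} to the measure $\nu:=4\pi\mu_X=\frac{4\pi}{N}\sum_{j}\delta_{x_j}$. It is a finite nonzero Borel measure with finite support, so the theorem gives a polytope, unique up to translation, once two hypotheses are checked. These are the balance condition \eqref{eq:minkowski-balance} and non-concentration on great circles.

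Balance is immediate from \eqref{eq:first-moment}, since $t\ge2\ge1$ gives $\int u\,d\nu=4\pi\int u\,d\mu_X=0$.

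For non-concentration, suppose $\nu$ were supported on the great circle $\theta^\perp\cap\Sph^2$ for some $\theta\in\Sph^2$. Then $\int(\theta\cdot u)^2\,d\mu_X(u)=0$. On the other hand, \eqref{eq:second-moment} gives
\[
\int(\theta\cdot u)^2\,d\mu_X=\theta^T\Bigl(\tfrac13I_3\Bigr)\theta=\tfrac13.
\]
This contradiction is the only place where $t\ge2$ is genuinely used. \Cref{thm:minkowski} therefore yields $P_X\in\K^3_\circ$, a polytope, unique up to translation, with $S_{P_X}=\nu$.

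The remaining step, which I expect to be the only point needing care, is reading off the facet structure from \eqref{eq:polytope-SA}. Write $S_{P_X}=\sum_{i=1}^M\Area(F_i)\delta_{u_i}$, where the $u_i$ are distinct outer unit normals of the facets and each $\Area(F_i)>0$. This is a discrete measure with strictly positive atoms exactly at the facet normals. Two discrete measures with positive atoms agree only if their supports and their atom weights agree.

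The nodes $x_j$ are distinct by \cref{def:design}, so $\nu$ has exactly $N$ atoms, each of mass $4\pi/N$. Hence $M=N$, and after relabeling $u_j=x_j$ and $\Area(F_j)=4\pi/N$.

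Finally, the total area is $\Area(\partial P_X)=S_{P_X}(\Sph^2)=4\pi\mu_X(\Sph^2)=4\pi$, which is \eqref{eq:surface-total}.
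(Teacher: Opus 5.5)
Your proposal is correct and follows the paper's proof step for step. You apply \Cref{thm:minkowski} to $4\pi\mu_X$, get balance from the first-moment identity, rule out concentration on a great circle via the isotropic second moment, and read off the facets and total area from \eqref{eq:polytope-SA}; your use of the distinctness of the nodes to get exactly $N$ atoms, hence exactly $N$ facets, makes explicit a point the paper leaves implicit.
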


\begin{proof}
Set
\[
\nu_X:=4\pi\mu_X=\frac{4\pi}{N}\sum_{j=1}^N\delta_{x_j}.
\]
By \eqref{eq:first-moment},
\[
\int_{\Sph^2}u\,d\nu_X(u)=\frac{4\pi}{N}\sum_{j=1}^Nx_j=0.
\]
It remains to show that $\nu_X$ is not concentrated on a great circle. Suppose that all $x_j$ lie in $a^\perp\cap\Sph^2$ for some $a\ne0$. Then \eqref{eq:second-moment} gives
\[
0=\frac1N\sum_{j=1}^N(a\cdot x_j)^2
=a^T\left(\frac13I_3\right)a
=\frac13|a|^2,
\]
a contradiction. Hence \Cref{thm:minkowski} gives a convex body $P_X$, unique up to translation, with $S_{P_X}=\nu_X$. Since $\nu_X$ has finite support, $P_X$ is a polytope. By \eqref{eq:polytope-SA}, each atom gives one facet with the stated normal and area. Summing the masses gives \eqref{eq:surface-total}.
\end{proof}

\begin{definition}[Minkowski polytope of a design]\label{def:minkowski-polytope}
The translation class determined by \Cref{thm:realization} is called the \emph{Minkowski polytope} of $X_N$. When a fixed representative is required, we impose the Steiner normalization
\begin{equation}\label{eq:steiner-normalization}
s(P_X)=0,
\end{equation}
where $s(K)$ denotes the Steiner point of $K$.
\end{definition}

\begin{remark}[The case $t=1$]
Degree-one exactness supplies the balance condition but does not prevent lower-dimensional support. For example, two antipodal points form a spherical $1$-design. The conclusion of \Cref{thm:realization} remains valid for a spherical $1$-design if one additionally assumes that the nodes are not contained in a great circle, equivalently that they span $\R^3$.
\end{remark}

The construction has a precise converse.

\begin{theorem}[Equal-area polytope characterization]\label{thm:converse}
Let $P\in\K^3_\circ$ be a convex polytope with $N$ facets of equal area, and let $x_1,\dots,x_N$ be their outer unit normals. Then
\begin{equation}\label{eq:normalized-SP}
\frac{S_P}{S_P(\Sph^2)}=\frac1N\sum_{j=1}^N\delta_{x_j}.
\end{equation}
Consequently, $X_N=\{x_1,\dots,x_N\}$ is a spherical $t$-design if and only if
\begin{equation}\label{eq:polytope-char}
\frac1{S_P(\Sph^2)}\int_{\Sph^2}p(u)\,dS_P(u)
=\int_{\Sph^2}p(u)\,d\sigma(u)
\end{equation}
for every $p\in\Pi_t(\Sph^2)$.
\end{theorem}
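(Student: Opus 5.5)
The plan is to read \eqref{eq:normalized-SP} directly off the polytope formula \eqref{eq:polytope-SA}, and then to translate the design condition \eqref{eq:design} into the integral condition \eqref{eq:polytope-char}.

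Let $a>0$ be the common area of the $N$ facets. By \eqref{eq:polytope-SA},
\[
S_P=\sum_{j=1}^N a\,\delta_{x_j}.
\]
The total mass is therefore $S_P(\Sph^2)=Na=\Area(\partial P)$. This is positive because $P$ has nonempty interior, and dividing by it gives \eqref{eq:normalized-SP}.

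One point needs care here: Definition~\ref{def:design} requires the nodes to be distinct. For a convex polytope, distinct facets have distinct outer unit normals, since a facet is the face $F(P,u)$ determined uniquely by its normal $u$. So the $x_j$ are pairwise distinct, and the right-hand side of \eqref{eq:normalized-SP} is exactly the empirical measure $\mu_X$ of the $N$-point set $X_N$. This identification is the only potentially subtle step, and it is settled by the uniqueness of the face in a given direction.

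For the equivalence, integrate an arbitrary $p\in\Pi_t(\Sph^2)$ against both sides of \eqref{eq:normalized-SP}:
\[
\frac1{S_P(\Sph^2)}\int_{\Sph^2}p\,dS_P=\frac1N\sum_{j=1}^N p(x_j).
\]
Hence \eqref{eq:polytope-char} holding for all $p\in\Pi_t(\Sph^2)$ is literally the same statement as \eqref{eq:design} for $X_N$. Both directions of the equivalence follow at once, and no further input (for instance, Theorem~\ref{thm:minkowski}) is needed.

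One could also remark that, combined with Theorem~\ref{thm:realization}, this gives a bijection between $t$-designs with $t\ge2$ and equal-facet-area polytopes of surface area $4\pi$, up to translation. That observation is not needed for the statement itself.
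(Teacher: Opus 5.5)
Your proof is correct and is essentially the paper's proof: write $S_P=a\sum_{j}\delta_{x_j}$ with $S_P(\Sph^2)=Na$, and note that \eqref{eq:polytope-char} then reads exactly as \eqref{eq:design}; your observation that distinct facets have distinct outer normals, so the $x_j$ are distinct as Definition~\ref{def:design} requires, is a small point the paper leaves implicit. Only your closing aside needs a qualifier: together with \Cref{thm:realization} this identifies $t$-designs with those equal-area polytopes of surface area $4\pi$ whose normals satisfy \eqref{eq:polytope-char}, not with all such polytopes (an equal-area triangular prism has balanced but non-isotropic normals).
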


\begin{proof}
If the common facet area is $a>0$, then
\[
S_P=a\sum_{j=1}^N\delta_{x_j},\qquad S_P(\Sph^2)=Na,
\]
which proves \eqref{eq:normalized-SP}. Substitution into \eqref{eq:polytope-char} gives exactly \eqref{eq:design}.
\end{proof}

\begin{corollary}[Orthogonal symmetries lift to polytope symmetries]\label{cor:symmetry}
Let $X_N$ be as in \Cref{thm:realization}, and let $Q\in O(3)$ satisfy $QX_N=X_N$. For the Steiner-normalized representative of $P_X$,
\[
QP_X=P_X.
\]
\end{corollary}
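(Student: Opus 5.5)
The plan is to combine the uniqueness part of \Cref{thm:minkowski} with the equivariance of surface area measures and Steiner points under orthogonal maps. Since $Q$ is orthogonal, $QP_X$ is again a convex polytope with nonempty interior. Its facets are the images $QF_j$ of the facets $F_j$ of $P_X$. The outer unit normal of $QF_j$ is $Qx_j$, and $\Area(QF_j)=\Area(F_j)=4\pi/N$. By \eqref{eq:polytope-SA}, equivalently by the push-forward identity $S_{QK}=Q_\#S_K$ for $Q\in O(3)$, this gives
\[
S_{QP_X}=\frac{4\pi}{N}\sum_{j=1}^N\delta_{Qx_j}.
\]

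Next we use the hypothesis $QX_N=X_N$. It says that $j\mapsto$ the index of $Qx_j$ is a permutation of $\{1,\dots,N\}$, so the sum above is a reindexing of the original one, and $S_{QP_X}=4\pi\mu_X=S_{P_X}$. The uniqueness statement in \Cref{thm:minkowski} (or in \Cref{thm:realization}) then yields $a\in\R^3$ with $QP_X=P_X+a$. The equal facet areas matter here: because every atom carries the same mass, any permutation of the nodes preserves the measure. With unequal weights this step would fail.

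The remaining step is to show $a=0$, and this is where the normalization enters. I will use two standard properties of the Steiner point (Schneider): it is translation covariant, $s(K+a)=s(K)+a$, and it is equivariant under the full orthogonal group, $s(QK)=Qs(K)$. The second follows from the formula
\[
s(K)=\frac{1}{\omega(\Sph^2)/3}\int_{\Sph^2}h_K(u)\,u\,d\omega(u),
\]
together with $h_{QK}(u)=h_K(Q^Tu)$ and the $O(3)$-invariance of $\omega$, including reflections. Applying $s$ to $QP_X=P_X+a$ and using $s(P_X)=0$ from \eqref{eq:steiner-normalization} gives
\[
0=Qs(P_X)=s(QP_X)=s(P_X)+a=a.
\]
Hence $QP_X=P_X$.

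I expect no real obstacle. The only points needing care are that the equivariance of $s$ must hold for improper $Q$ as well, which the integral formula covers, and the bookkeeping that facets map to facets with rotated normals and preserved areas.
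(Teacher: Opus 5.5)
Your proof is correct and follows the same route as the paper: $S_{QP_X}=Q_\#S_{P_X}=S_{P_X}$ because $Q$ permutes the equally weighted atoms, Minkowski uniqueness then gives $QP_X=P_X+a$, and translation covariance together with $O(3)$-equivariance of the Steiner point forces $a=0$. You add detail the paper leaves implicit, namely the facet-by-facet check of the push-forward identity and a derivation of Steiner equivariance (including reflections) from the integral formula.
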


\begin{proof}
The equality $QX_N=X_N$ implies $Q_\#S_{P_X}=S_{P_X}$, while $S_{QP_X}=Q_\#S_{P_X}$. Minkowski uniqueness gives $QP_X=P_X+a$ for some $a\in\R^3$. The Steiner point is translation covariant and orthogonally equivariant, so the normalization $s(P_X)=0$ gives $s(QP_X)=0=s(P_X+a)=a$. Hence $a=0$.
\end{proof}

\section{High-order isotropy and exact geometric identities}\label{sec:isotropy}

The Minkowski realization turns polynomial exactness into exact identities for the normal geometry of $P_X$. This is the first major consequence of the correspondence.

\subsection{Surface tensors}

\begin{definition}[Normalized surface tensor]\label{def:surface-tensor}
For $K\in\K^3_\circ$ and $k\ge0$, define
\begin{equation}\label{eq:surface-tensor}
M_k(K):=\frac1{S_K(\Sph^2)}\int_{\Sph^2}u^{\otimes k}\,dS_K(u).
\end{equation}
\end{definition}
These tensors are translation invariant and orthogonally covariant:
\[
M_k(QK)=Q^{\otimes k}M_k(K),\qquad Q\in O(3).
\]

\begin{theorem}[High-order isotropy]\label{thm:isotropy}
Let $X_N$ be a spherical $t$-design with $t\ge2$, and let $P_X$ be its Minkowski polytope. Then
\begin{equation}\label{eq:isotropy}
M_k(P_X)=M_k(B),\qquad 0\le k\le t.
\end{equation}
Equivalently, for every $a\in\R^3$ and every integer $m$ with $2m\le t$,
\begin{equation}\label{eq:even-moments}
\frac1{4\pi}\int_{\Sph^2}(a\cdot u)^{2m}\,dS_{P_X}(u)=\frac{|a|^{2m}}{2m+1},
\end{equation}
and all odd moments of order at most $t$ vanish.
\end{theorem}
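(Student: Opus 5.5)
The plan is to reduce everything to the design identity \eqref{eq:design} applied entrywise to the tensor $u^{\otimes k}$. First I would compute the ball's side. Since $S_B=\omega$ and $S_B(\Sph^2)=4\pi$, we have $M_k(B)=\int_{\Sph^2}u^{\otimes k}\,d\sigma(u)$. By \Cref{thm:realization}, $S_{P_X}=4\pi\mu_X$, so $S_{P_X}(\Sph^2)=4\pi$ and
\[
M_k(P_X)=\int_{\Sph^2}u^{\otimes k}\,d\mu_X(u)=\frac1N\sum_{j=1}^N x_j^{\otimes k}.
\]
Each entry of $u^{\otimes k}$ is a monomial $u_{i_1}\cdots u_{i_k}$ of degree $k$. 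For $k\le t$ it lies in $\Pi_t(\Sph^2)$, so \eqref{eq:design} gives equality of every entry and hence \eqref{eq:isotropy}. The case $k=0$ is trivial, because both sides equal $1$.

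For the equivalent scalar form, I would contract the tensor identity with $a^{\otimes k}$. This gives
\[
\frac1{4\pi}\int_{\Sph^2}(a\cdot u)^k\,dS_{P_X}(u)=\int_{\Sph^2}(a\cdot u)^k\,d\sigma(u)
\]
for $k\le t$. Conversely, a symmetric tensor is determined by its values on $a^{\otimes k}$ by polarization, and $M_k$ is symmetric. So the family of scalar identities over all $a$ is equivalent to \eqref{eq:isotropy}.

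It then remains to evaluate the ball moments. By rotation invariance of $\sigma$, I may take $a=|a|e_3$, which reduces the integral to $|a|^k\int_{\Sph^2}u_3^k\,d\sigma$. Archimedes' theorem says $u_3$ is uniformly distributed on $[-1,1]$ under $\sigma$, so this integral is
\[
|a|^k\cdot\frac12\int_{-1}^1 s^k\,ds .
\]
For $k=2m$ this equals $|a|^{2m}/(2m+1)$. For odd $k$ it vanishes; alternatively, this follows from the antipodal symmetry $u\mapsto -u$ of $\sigma$. The vanishing of the odd tensors themselves then follows by polarization.

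There is no serious obstacle. The only point needing care is the equivalence between the tensor statement and the scalar moment statement. I would handle it by noting explicitly that both $M_k(P_X)$ and $M_k(B)$ are symmetric tensors, so that polarization applies. I would also record that no information beyond degree $t$ is used, so the range $0\le k\le t$ is exactly what the design hypothesis provides.
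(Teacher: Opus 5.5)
Your proposal is correct and follows the paper's proof essentially step for step. It applies design exactness entrywise to $u^{\otimes k}$ via $S_{P_X}=4\pi\mu_X$, contracts with $a$, rotates $a$ to the north pole to reduce to $\frac12\int_{-1}^1 s^k\,ds$, and gets vanishing of odd moments from antipodal symmetry. Your explicit polarization argument for the ``equivalently'' clause is a small justification the paper leaves implicit.
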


\begin{proof}
By \eqref{eq:realization},
\[
M_k(P_X)=\int_{\Sph^2}u^{\otimes k}\,d\mu_X(u).
\]
Every component of $u^{\otimes k}$ is a polynomial of degree $k\le t$, so design exactness gives
\[
M_k(P_X)=\int_{\Sph^2}u^{\otimes k}\,d\sigma(u)=M_k(B).
\]
For fixed $a$, apply this identity to $(a\cdot u)^k$. After rotating $a/|a|$ to the north pole,
\[
\int_{\Sph^2}(a\cdot u)^{2m}\,d\sigma(u)
=|a|^{2m}\frac12\int_{-1}^1s^{2m}\,ds
=\frac{|a|^{2m}}{2m+1}.
\]
Odd moments vanish by antipodal symmetry of $\sigma$.
\end{proof}

Let $\mathcal P_2(2m)$ denote the set of pair partitions of $\{1,\dots,2m\}$.

\begin{corollary}[Explicit isotropic tensors]\label{cor:explicit-tensors}
If $2m\le t$, then
\begin{equation}\label{eq:explicit-tensor}
[M_{2m}(P_X)]_{i_1\cdots i_{2m}}
=\frac1{(2m+1)!!}
\sum_{\pi\in\mathcal P_2(2m)}\prod_{\{a,b\}\in\pi}\delta_{i_ai_b}.
\end{equation}
In particular,
\[
M_1(P_X)=0,\qquad M_2(P_X)=\frac13I_3,
\]
and
\[
[M_4(P_X)]_{ijkl}
=\frac1{15}(\delta_{ij}\delta_{kl}+\delta_{ik}\delta_{jl}+\delta_{il}\delta_{jk}).
\]
\end{corollary}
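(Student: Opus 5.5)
By \Cref{thm:isotropy}, for $2m\le t$ we have $M_{2m}(P_X)=M_{2m}(B)=\int_{\Sph^2}u^{\otimes 2m}\,d\sigma(u)$. The task is therefore to compute the isotropic moment tensor $T_{2m}:=\int u^{\otimes 2m}\,d\sigma$ explicitly; the facts about $P_X$ are already in hand.

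\emph{Structure of the tensor.} The plan is to identify $T_{2m}$ through its contractions with $a^{\otimes 2m}$. $T_{2m}$ is a symmetric tensor. Symmetric tensors are determined by the polynomial $a\mapsto\langle T,a^{\otimes 2m}\rangle$, by polarization. Let $S_{2m}$ denote the right-hand side of \eqref{eq:explicit-tensor} without the prefactor, that is, the sum over pair partitions. $S_{2m}$ is symmetric, since permuting indices permutes pair partitions. Its contraction is
\[
\langle S_{2m},a^{\otimes 2m}\rangle=|\mathcal P_2(2m)|\,|a|^{2m}=(2m-1)!!\,|a|^{2m},
\]
because each pair contributes a factor $\sum_i a_i a_i=|a|^2$. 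On the other side, \eqref{eq:even-moments} gives
\[
\langle T_{2m},a^{\otimes 2m}\rangle=\frac{|a|^{2m}}{2m+1}.
\]
Hence $T_{2m}=c\,S_{2m}$ with $c=1/\bigl((2m+1)(2m-1)!!\bigr)=1/(2m+1)!!$. This yields \eqref{eq:explicit-tensor}.

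\emph{Special cases.} The formula $M_1(P_X)=0$ is the vanishing of odd moments in \Cref{thm:isotropy}; it requires $t\ge1$, which holds. For $m=1$ there is one pair partition, so $M_2=\tfrac13\delta_{ij}=\tfrac13 I_3$. This also agrees with \eqref{eq:second-moment}. For $m=2$ there are three pair partitions, and $5!!=15$, which gives the stated $M_4$ whenever $t\ge4$.

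\emph{Main obstacle.} The only step needing care is the polarization claim: a symmetric tensor is determined by its values on the diagonal $a^{\otimes 2m}$. This is the standard bijection between symmetric tensors and homogeneous polynomials over $\R$, obtained by differentiating $\langle T,a^{\otimes 2m}\rangle$ $2m$ times. The remaining steps are counting $|\mathcal P_2(2m)|=(2m-1)!!$ and the identity $(2m+1)(2m-1)!!=(2m+1)!!$.
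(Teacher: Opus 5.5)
Your proof is correct. It differs from the paper's in one step.

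Both arguments fix the constant the same way: contract with $a^{\otimes 2m}$ and compare $\int_{\Sph^2}(a\cdot u)^{2m}\,d\sigma=|a|^{2m}/(2m+1)$ with $|\mathcal P_2(2m)|\,|a|^{2m}=(2m-1)!!\,|a|^{2m}$.

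The difference is how proportionality to $S_{2m}$ is established.
\begin{itemize}
\item The paper appeals to rotational invariance of $\int_{\Sph^2}u^{\otimes 2m}\,d\sigma$. It then uses, without proof, the structure fact that an $O(3)$-invariant symmetric tensor of even order is a multiple of the complete symmetrization of $\delta^{\otimes m}$.
\item You bypass that classification. Both $M_{2m}(B)$ and $S_{2m}$ are symmetric, and their associated homogeneous polynomials already agree up to the factor $1/(2m+1)!!$. Polarization then gives equality directly. Rotation invariance enters only through the one-variable computation behind \eqref{eq:even-moments}.
\end{itemize}

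Your route is more self-contained: it effectively proves the needed case of the classification, since an invariant polynomial is constant on spheres and hence a multiple of $|a|^{2m}$. The paper's version is shorter to state but rests on an invariant-theory fact it leaves unproved.

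You also note explicitly that the $M_4$ formula requires $t\ge 4$, which the paper's ``in particular'' leaves implicit.
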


\begin{proof}
The uniform moment tensor is rotationally invariant and hence a scalar multiple of the complete symmetrization of tensor products of the Euclidean metric. Contracting all indices with a vector $a$ and using \eqref{eq:even-moments} determines the scalar as $1/(2m+1)!!$.
\end{proof}

\subsection{Mixed volumes}
The same exactness principle applies to geometric functionals that are linear in the surface-area measure.

\begin{theorem}[Exact mixed volumes for polynomial support functions]\label{thm:mixed-exact}
Let $X_N$ be a spherical $t$-design and let $P_X$ be its Minkowski polytope. If $L\in\K^3$ has support function $h_L\in\Pi_t(\Sph^2)$, then
\begin{equation}\label{eq:mixed-exact}
V(P_X,P_X,L)=V(B,B,L).
\end{equation}
More generally, for every $L\in\K^3$,
\begin{equation}\label{eq:mixed-error}
|V(P_X,P_X,L)-V(B,B,L)|\le\frac{8\pi}{3}E_t(h_L)_\infty,
\end{equation}
where
\[
E_t(h_L)_\infty:=\inf_{p\in\Pi_t(\Sph^2)}\|h_L-p\|_\infty.
\]
\end{theorem}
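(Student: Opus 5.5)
The plan is to work entirely through the integral representation \eqref{eq:mixed-volume} of the mixed volume, which is linear in the surface-area measure of the first two arguments. By \Cref{thm:realization}, $S_{P_X}=4\pi\mu_X$, and for the unit ball $S_B=\omega=4\pi\sigma$. Hence for every $L\in\K^3$
\[
V(P_X,P_X,L)-V(B,B,L)=\frac{4\pi}{3}\left(\int_{\Sph^2}h_L\,d\mu_X-\int_{\Sph^2}h_L\,d\sigma\right),
\]
and the whole statement reduces to estimating the cubature error of the design applied to $h_L$. I will quietly use $t\ge2$, as in \Cref{thm:realization}, so that $P_X$ exists.

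For the exact identity \eqref{eq:mixed-exact}, the hypothesis $h_L\in\Pi_t(\Sph^2)$ together with \eqref{eq:design} makes the bracket vanish, and \eqref{eq:mixed-exact} follows immediately.

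For the error bound \eqref{eq:mixed-error}, I fix an arbitrary $p\in\Pi_t(\Sph^2)$. Since $\mu_X$ and $\sigma$ agree on $p$, the bracket equals
\[
\int(h_L-p)\,d\mu_X-\int(h_L-p)\,d\sigma .
\]
Both $\mu_X$ and $\sigma$ are probability measures, so each term is at most $\|h_L-p\|_\infty$ in absolute value, and the bracket is at most $2\|h_L-p\|_\infty$. Multiplying by $4\pi/3$ gives $\frac{8\pi}{3}\|h_L-p\|_\infty$. Taking the infimum over $p$ then yields \eqref{eq:mixed-error}. The infimum need not be attained, but this causes no difficulty since the bound holds for every $p$.

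There is no real obstacle here. The only points to check are the normalizations: that $S_B$ is exactly $\omega=4\pi\sigma$ (total mass $4\pi$, matching \eqref{eq:surface-total}), and the factor $1/3$ in \eqref{eq:mixed-volume}. These two together produce the constant $\frac{4\pi}{3}\cdot 2=\frac{8\pi}{3}$.
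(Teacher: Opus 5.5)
Your proposal is correct and follows the paper's proof essentially verbatim: you use the integral representation to write the difference as $\frac{4\pi}{3}\int h_L\,d(\mu_X-\sigma)$, then apply exactness and bound the error by $2\|h_L-p\|_\infty$ before taking the infimum. Your remark that $t\ge2$ is implicitly needed for $P_X$ to exist is a fair observation that the paper's statement leaves tacit.
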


\begin{proof}
Using \eqref{eq:mixed-volume}, \eqref{eq:realization}, and $S_B=4\pi\sigma$,
\begin{equation}\label{eq:mixed-difference}
V(P_X,P_X,L)-V(B,B,L)
=\frac{4\pi}{3}\int_{\Sph^2}h_L\,d(\mu_X-\sigma).
\end{equation}
If $h_L\in\Pi_t$, the right-hand side vanishes. For arbitrary $p\in\Pi_t$, exactness gives
\[
\left|\int h_L\,d(\mu_X-\sigma)\right|
=\left|\int(h_L-p)\,d(\mu_X-\sigma)\right|
\le2\|h_L-p\|_\infty.
\]
Taking the infimum proves \eqref{eq:mixed-error}.
\end{proof}

\begin{corollary}[Lipschitz mixed-volume estimate]\label{cor:mixed-lip}
Let $C_J$ be a Jackson constant on $\Sph^2$: every Lipschitz function $f$ admits $p_t\in\Pi_t$ with
\begin{equation}\label{eq:jackson}
\|f-p_t\|_\infty\le\frac{C_J}{t+1}\Lip(f).
\end{equation}
If, after a translation, $L\subset RB$, then
\begin{equation}\label{eq:mixed-lip}
|V(P_X,P_X,L)-V(B,B,L)|\le\frac{8\pi C_JR}{3(t+1)}.
\end{equation}
\end{corollary}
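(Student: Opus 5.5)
The estimate will follow by feeding a Jackson approximation of $h_L$ into \eqref{eq:mixed-error} of \Cref{thm:mixed-exact}. That bound says
\[
|V(P_X,P_X,L)-V(B,B,L)|\le\frac{8\pi}{3}E_t(h_L)_\infty,
\]
so it suffices to show $E_t(h_L)_\infty\le C_JR/(t+1)$. By \eqref{eq:jackson}, this reduces to bounding the Lipschitz constant of $h_L$ on $\Sph^2$ by $R$.

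First I remove the translation. Both sides of \eqref{eq:mixed-lip} are translation invariant in $L$. Replacing $L$ by $L+a$ changes $h_L$ by the linear function $u\mapsto a\cdot u$. By \eqref{eq:mixed-volume}, the corresponding change in $V(P_X,P_X,L)$ is $\frac13\int a\cdot u\,dS_{P_X}(u)$. This vanishes because of \eqref{eq:first-moment} and \eqref{eq:realization}, which use $t\ge1$. The change in $V(B,B,L)$ vanishes for the same reason with $S_B=4\pi\sigma$. Alternatively, one can note that linear functions lie in $\Pi_t$, so $E_t(h_L)_\infty$ is itself translation invariant. Either way, we may assume $L\subset RB$.

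Next I establish the Lipschitz bound. For $u,v\in\Sph^2$, let $x\in L$ attain $h_L(u)=x\cdot u$. Then
\[
h_L(u)-h_L(v)\le x\cdot u-x\cdot v\le |x|\,|u-v|\le R|u-v|.
\]
By symmetry, $|h_L(u)-h_L(v)|\le R|u-v|$. Since the chordal distance is at most the geodesic distance, $\Lip(h_L)\le R$ with respect to either metric. Whichever metric the Jackson constant $C_J$ is normalized for, the bound holds.

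Finally I apply \eqref{eq:jackson}, which gives $p_t\in\Pi_t$ with $\|h_L-p_t\|_\infty\le C_JR/(t+1)$. Inserting this into \eqref{eq:mixed-error} yields \eqref{eq:mixed-lip}. The only delicate point is the reduction in the first step: it relies on design exactness in degree one, and it also requires checking that the metric convention for $\Lip$ in \eqref{eq:jackson} is compatible with the bound $\Lip(h_L)\le R$. I expect no real obstacle beyond this.
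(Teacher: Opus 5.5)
Your proposal is correct and follows the paper's proof. You bound $\Lip(h_L)\le R$ via $|h_L(u)-h_L(v)|\le R|u-v|\le R\,d_{\Sph^2}(u,v)$ and insert the Jackson estimate into \eqref{eq:mixed-error}. Your explicit translation reduction, which the paper leaves implicit, is fine, but it needs no design input: every surface area measure $S_K$ has vanishing first moment, so $V(K,K,L+a)=V(K,K,L)$ holds for arbitrary convex bodies.
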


\begin{proof}
For $u,v\in\Sph^2$,
\[
|h_L(u)-h_L(v)|\le R|u-v|\le R\,d_{\Sph^2}(u,v),
\]
so $\Lip(h_L)\le R$. Combine \eqref{eq:jackson} with \eqref{eq:mixed-error}. The spherical Jackson inequality can be found, for example, in Reimer~\cite[Corollary~6.37]{Reimer2003} or Dai--Xu~\cite{DaiXu2013}.
\end{proof}

\begin{remark}[Geometric meaning of design strength]\label{rem:geometric-strength}
\Cref{thm:isotropy,thm:mixed-exact} give a geometric interpretation of the parameter $t$. Increasing the design strength does not merely enlarge a polynomial test space: it forces the normalized surface-area measure of $P_X$ to reproduce progressively higher-order isotropic tensors and a progressively larger family of exact mixed-volume functionals. In this sense, spherical $t$-designs encode high-order normal isotropy of equal-area polytopes.
\end{remark}

\section{Wasserstein discrepancy and universal quantitative sphericity}\label{sec:quantitative}

We now pass from exact identities to quantitative shape control. The argument has three ingredients: polynomial exactness gives a $W_1$ estimate, degree-two exactness gives a uniform dispersion bound, and quantitative inverse stability for Minkowski's theorem converts these measure-theoretic statements into Hausdorff control.

\subsection{Wasserstein discrepancy from polynomial exactness}
We use the spherical Jackson estimate \eqref{eq:jackson}.

\begin{theorem}[Universal Wasserstein bound]\label{thm:w1}
Every spherical $t$-design satisfies
\begin{equation}\label{eq:w1}
W_1(\mu_X,\sigma)\le\frac{2C_J}{t+1}.
\end{equation}
Consequently,
\begin{equation}\label{eq:w1-surface}
W_1\!\left(\frac{S_{P_X}}{S_{P_X}(\Sph^2)},\frac{S_B}{S_B(\Sph^2)}\right)
\le\frac{2C_J}{t+1}.
\end{equation}
\end{theorem}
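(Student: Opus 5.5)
The plan is to use Kantorovich--Rubinstein duality together with the Jackson estimate \eqref{eq:jackson}, following the argument of \Cref{thm:mixed-exact}. On the compact metric space $(\Sph^2,d_{\Sph^2})$, both $\mu_X$ and $\sigma$ are probability measures, so
\[
W_1(\mu_X,\sigma)=\sup\left\{\int_{\Sph^2}f\,d(\mu_X-\sigma):\ \Lip(f)\le1\right\}.
\]
We will take the Lipschitz constant with respect to geodesic distance, which matches the convention used in \eqref{eq:jackson}. Subtracting a constant from $f$ changes neither side, so no normalization of $f$ is needed.

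Fix $f$ with $\Lip(f)\le1$, and let $p_t\in\Pi_t(\Sph^2)$ be the Jackson approximant from \eqref{eq:jackson}, so that $\|f-p_t\|_\infty\le C_J/(t+1)$. Design exactness \eqref{eq:design} gives $\int p_t\,d(\mu_X-\sigma)=0$. Hence
\[
\left|\int f\,d(\mu_X-\sigma)\right|=\left|\int(f-p_t)\,d(\mu_X-\sigma)\right|\le\|f-p_t\|_\infty\bigl(\mu_X(\Sph^2)+\sigma(\Sph^2)\bigr)\le\frac{2C_J}{t+1}.
\]
Taking the supremum over $f$ yields \eqref{eq:w1}. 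The bound could be sharpened, since only the total variation $\|\mu_X-\sigma\|_{TV}\le2$ enters, but the stated constant is all we need. The argument uses only the definition of a design, so it applies to every $t\ge1$ and $t\ge2$ is not required.

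For \eqref{eq:w1-surface}, \Cref{thm:realization} gives $S_{P_X}=4\pi\mu_X$ and $S_{P_X}(\Sph^2)=4\pi$. We also have $S_B=\omega$ with $S_B(\Sph^2)=4\pi$. The normalized measures are therefore exactly $\mu_X$ and $\sigma$, and \eqref{eq:w1-surface} follows from \eqref{eq:w1}.

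The only point needing care, and it is mild, is the input \eqref{eq:jackson}. It has to hold with the uniform norm and with a Lipschitz seminorm that is compatible with the metric used to define $W_1$. If one prefers the chordal metric, the two Lipschitz constants differ by at most the factor $\pi/2$, which can be absorbed into $C_J$. Otherwise the proof is direct.
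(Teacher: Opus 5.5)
Your proposal is correct and follows essentially the same route as the paper: Kantorovich--Rubinstein duality, replacing $f$ by its Jackson approximant $p_t$ via design exactness, and bounding by $2\|f-p_t\|_\infty$, with the surface-area form obtained from $S_{P_X}/4\pi=\mu_X$ and $S_B/4\pi=\sigma$. Your extra remarks on the metric convention for the Lipschitz seminorm, and on $t\ge2$ being needed only for $P_X$ to exist in the second inequality, are accurate.
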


\begin{proof}
By Kantorovich--Rubinstein duality,
\[
W_1(\mu_X,\sigma)
=\sup_{\Lip(f)\le1}\left|\int_{\Sph^2}f\,d(\mu_X-\sigma)\right|.
\]
For such $f$, choose $p_t$ as in \eqref{eq:jackson}. Since the design integrates $p_t$ exactly,
\[
\left|\int f\,d(\mu_X-\sigma)\right|
=\left|\int(f-p_t)\,d(\mu_X-\sigma)\right|
\le2\|f-p_t\|_\infty
\le\frac{2C_J}{t+1}.
\]
Taking the supremum proves \eqref{eq:w1}. The surface-area form follows from $S_{P_X}/4\pi=\mu_X$ and $S_B/4\pi=\sigma$.
\end{proof}

\subsection{Uniform dispersion forced by degree-two exactness}
For a probability measure $\mu$ on $\Sph^2$, define
\begin{equation}\label{eq:theta}
\Theta(\mu):=\inf_{\theta\in\Sph^2}\int_{\Sph^2}|\theta\cdot u|\,d\mu(u).
\end{equation}
This functional is positive exactly when the measure is not supported on a great circle and is the quantitative nondegeneracy parameter used in quantitative stability theory for Minkowski's problem.

\begin{proposition}[Designs are uniformly dispersed]\label{prop:dispersion}
If $X_N$ is a spherical $t$-design with $t\ge2$, then
\begin{equation}\label{eq:dispersion}
\Theta(\mu_X)\ge\frac13.
\end{equation}
Moreover,
\[
\Theta(\sigma)=\frac12.
\]
\end{proposition}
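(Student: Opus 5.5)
The plan is to compare $|\theta\cdot u|$ with $(\theta\cdot u)^2$ pointwise, and then let degree-two exactness do the work. For $u,\theta\in\Sph^2$ we have $|\theta\cdot u|\le1$, hence
\[
|\theta\cdot u|\ge(\theta\cdot u)^2 .
\]
Integrating this against $\mu_X$ and applying the second-moment identity \eqref{eq:second-moment}, which holds because $t\ge2$, gives for every $\theta\in\Sph^2$
\[
\int_{\Sph^2}|\theta\cdot u|\,d\mu_X(u)\ \ge\ \int_{\Sph^2}(\theta\cdot u)^2\,d\mu_X(u)=\theta^T\Big(\tfrac13I_3\Big)\theta=\tfrac13 .
\]
The right-hand side does not depend on $\theta$. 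Taking the infimum over $\theta$ therefore yields \eqref{eq:dispersion}.

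For the value $\Theta(\sigma)=\tfrac12$, I will use rotation invariance of $\sigma$: the integral $\int|\theta\cdot u|\,d\sigma(u)$ is the same for every $\theta$. So it suffices to compute it for $\theta$ equal to the north pole. Archimedes' hat-box theorem, used in the same way as in the proof of \Cref{thm:isotropy}, shows that the push-forward of $\sigma$ under $u\mapsto\theta\cdot u$ is uniform on $[-1,1]$. This gives
\[
\tfrac12\int_{-1}^1|s|\,ds=\tfrac12 .
\]
Because the integral is constant in $\theta$, its infimum is also $\tfrac12$.

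No step here is a real obstacle; the only point that needs care is the direction of the elementary inequality $|s|\ge s^2$ on $[-1,1]$. The constant $\tfrac13$ is not claimed to be sharp. If the paper later needs a better bound, one could refine the argument using fourth-moment exactness for $t\ge4$. Specifically, one would minimize $\int|s|\,d\nu$ over measures $\nu$ on $[-1,1]$ with prescribed moments, which is a Markov–Krein-type problem. That refinement is not required for the statement.
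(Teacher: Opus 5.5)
Your proposal is correct and follows the paper's proof essentially verbatim. The paper also integrates the pointwise bound $|s|\ge s^2$ on $[-1,1]$ against $\mu_X$ and applies the second-moment identity to get $\Theta(\mu_X)\ge\frac13$; for $\sigma$ it uses rotation invariance to reduce to $\frac12\int_{-1}^1|s|\,ds=\frac12$.
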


\begin{proof}
Fix $\theta\in\Sph^2$. Since $|s|\ge s^2$ for $|s|\le1$,
\[
\int|\theta\cdot u|\,d\mu_X(u)
\ge\int(\theta\cdot u)^2\,d\mu_X(u)
=\theta^T\left(\frac13I_3\right)\theta
=\frac13,
\]
where the second-moment identity follows from degree-two exactness. Taking the infimum proves \eqref{eq:dispersion}. For $\sigma$, rotational invariance reduces the integral to $\frac12\int_{-1}^1|s|\,ds=1/2$.
\end{proof}

\begin{remark}\label{rem:dispersion-universal}
The estimate \eqref{eq:dispersion} is independent of $N$ and of any separation or mesh-ratio condition. It is a direct convex-geometric consequence of the isotropic second moment. Thus the nondegeneracy needed for quantitative inversion of the surface-area measure is already built into every spherical design of strength at least two.
\end{remark}

\subsection{Quantitative inverse stability}
For probability measures $\mu,\nu$ on $\Sph^2$, define the dual-convex distance
\begin{equation}\label{eq:dc}
d_C(\mu,\nu):=\sup_{K\subset B}\left|\int_{\Sph^2}h_K\,d(\mu-\nu)\right|,
\end{equation}
where the supremum runs over convex bodies contained in the unit ball. Since $h_K$ is $1$-Lipschitz when $K\subset B$,
\begin{equation}\label{eq:dc-w1}
d_C(\mu,\nu)\le W_1(\mu,\nu).
\end{equation}
For convex bodies $K,L\subset\R^n$, let
\begin{equation}\label{eq:fraenkel}
\alpha(K,L):=\inf_{\substack{a\in\R^n,\ r>0\\ r^n|L|=|K|}}
\frac{|K\triangle(a+rL)|}{|K|}.
\end{equation}

We record the form of the quantitative inverse Minkowski theorem used below. It is Theorem~1.1 of K.~J. B\"or\"oczky, J.~M. Machado, and J.~P.~G. Ramos, \emph{Quantitative stability for Minkowski's problem}, arXiv:2603.17726v3 (2026).

\begin{theorem}[Quantitative stability for Minkowski's problem]\label{thm:BMR}
Let $\mu$ and $\nu$ be centered probability measures on $\Sph^{n-1}$, and suppose $\Theta(\mu),\Theta(\nu)\ge\vartheta>0$. Let $E_\mu,E_\nu$ be convex bodies whose surface area measures are $\mu$ and $\nu$, respectively. Then
\begin{equation}\label{eq:BMR-H}
\inf_{a\in\R^n}d_H(E_\mu,a+E_\nu)
\le C_{n,\vartheta}d_C(\mu,\nu)^{1/(n-1)}.
\end{equation}
Furthermore,
\begin{equation}\label{eq:BMR-F}
\alpha(E_\mu,E_\nu)^2
\le C_{n,\vartheta}d_C(\mu,\nu)^{1+1/(n-1)}.
\end{equation}
The exponent $1/(n-1)$ in \eqref{eq:BMR-H} is sharp in the general class covered by the theorem.
\end{theorem}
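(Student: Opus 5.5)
Since \Cref{thm:BMR} is quoted from B\"or\"oczky--Machado--Ramos, the paper only needs to cite it. If I had to prove it myself, I would use the variational, Minkowski-inequality route. Write $V_1(K,L):=\frac1n\int h_L\,dS_K$ for the first mixed volume, so that $V_1(K,K)=|K|$.

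\textbf{Step 1: a priori bounds from the nondegeneracy hypothesis $\Theta\ge\vartheta$.} First I would bound the diameter. If $E=E_\mu$ contains a segment of length $D$ in direction $\theta$, then $h_E(u)+h_E(-u)\ge D|\theta\cdot u|$. Since $\mu$ is centered, $n|E|=\int h_E\,d\mu$ is translation invariant, and symmetrizing in $u\mapsto -u$ gives $n|E|\ge \tfrac D2\Theta(\mu)$. The isoperimetric inequality bounds $|E|$ above in terms of the surface area, which equals $1$. Hence $\operatorname{diam}E\le C_n/\vartheta$, and the same holds for $E_\nu$. Next I would show that the volumes $|E_\mu|,|E_\nu|$ are bounded below by some $c_{n,\vartheta}>0$. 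For this I would use a John-ellipsoid argument: if $E$ were very thin in some direction $\theta$, then $\mu$ would concentrate near $\pm\theta$, which forces $\Theta$ small in a transversal direction. Together, these bounds put both bodies in a compact, uniformly nondegenerate family after translation. I expect this part to be routine.

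\textbf{Step 2: near-equality in Minkowski's first inequality.} Translate $E_\nu$ so that it lies in a ball of radius $R=C/\vartheta$ around the origin. Since $d_C$ is defined with bodies $K\subset B$, rescaling gives
\[
V_1(E_\mu,E_\nu)=\tfrac1n\int h_{E_\nu}\,d\mu\le \tfrac1n\int h_{E_\nu}\,d\nu+\tfrac Rn d_C(\mu,\nu)=|E_\nu|+\tfrac Rn d_C .
\]
Symmetrically, $V_1(E_\nu,E_\mu)\le |E_\mu|+\tfrac Rn d_C$. Minkowski's inequality $V_1(K,L)^n\ge |K|^{n-1}|L|$, applied in both orders, then shows two things: the volumes agree up to $O(d_C)$, and both Minkowski deficits
\[
\delta(K,L)=\frac{V_1(K,L)}{|K|^{(n-1)/n}|L|^{1/n}}-1
\]
are $O(d_C)$.

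\textbf{Step 3: from deficit to shape.} I would feed the deficit bound into the sharp quantitative Minkowski inequality of Figalli--Maggi--Pratelli, $\alpha(K,L)^2\le C_n\,\delta(K,L)$. This already gives $\alpha^2\lesssim d_C$. For the Hausdorff bound, I would use the standard convex-body comparison: for bodies of bounded diameter, comparable volume and inradius bounded below, if $K$ and $a+L$ differ in Hausdorff distance $\eta$, then the symmetric difference contains a cap of volume $\gtrsim\eta^{(n+1)/2}$, or $\gtrsim\eta^{n}$ in the worst case. This yields a H\"older bound for $d_H$ in terms of $\alpha$.

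\textbf{Main obstacle.} The difficulty is the exponents. The naive chain above gives $\alpha^2\lesssim d_C$ and a Hausdorff exponent much worse than $1/(n-1)$, while the theorem claims $d_C^{1+1/(n-1)}$ and $d_C^{1/(n-1)}$. To improve this, I would exploit that $d_C$ controls \emph{every} support-function test, not just the single test $h_{E_\nu}$. First I would choose test bodies built from $E_\mu\cap(E_\nu+a)$ and from local caps. The point is that $\int h\,d(\mu-\nu)$ evaluated on a cap of height $\eta$ detects a surface-area defect of size $\eta^{n-1}$, and this would give $\eta^{n-1}\lesssim d_C$ directly. Second, I would combine this with the deficit bound to upgrade the symmetric-difference estimate to $\alpha^2\lesssim d_C\cdot d_C^{1/(n-1)}$. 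Making this cap-localization argument rigorous is the step I expect to be hardest. Finally, I would check sharpness with a polytope whose single facet is slightly shifted, which realizes the rate $d_C^{1/(n-1)}$.
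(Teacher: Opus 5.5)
Your opening remark matches the paper: \Cref{thm:BMR} is imported as Theorem~1.1 of B\"or\"oczky--Machado--Ramos without proof. The paper's own work is only to check centering and $\Theta\ge 1/3$ for design measures. Your self-contained sketch, however, does not prove the statement, and what it leaves out is the substance of the cited result.

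\textbf{The Fraenkel estimate.} Steps~1--2 are sound, and with Figalli--Maggi--Pratelli they give $\alpha^2\lesssim\delta\lesssim d_C$. The cone comparison then yields only $\inf_a d_H\lesssim\alpha^{1/n}\lesssim d_C^{1/(2n)}$. The exponent $2$ in Figalli--Maggi--Pratelli is sharp (ellipsoids near the ball). So reaching \eqref{eq:BMR-F} through the deficit requires $\delta\lesssim d_C^{n/(n-1)}$, whereas Step~2 gives only $\delta\lesssim d_C$. With $K=E_\mu$ and $L=a+E_\nu$ one has
\[
n\bigl(V_1(K,L)-|L|\bigr)+n\bigl(V_1(L,K)-|K|\bigr)=-\int_{\Sph^{n-1}}(h_K-h_L)\,d(\mu-\nu).
\]
This controls both deficits up to an $O(d_C^2)$ volume-mismatch term. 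A bound of order $d_C\cdot d_H(K,L)$ for this pairing, combined with \eqref{eq:BMR-H}, would give exactly \eqref{eq:BMR-F}. But no such bound follows from the definition of $d_C$. In general $h_K-h_L$ is not a support function at all, let alone that of a body inside $d_H(K,L)\,B$. Testing $h_K$ and $h_L$ separately only returns $O(d_C)$.

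\textbf{The Hausdorff estimate.} This has the same gap. Your cap-localization never produces a test body $M\subset B$ with $|\int h_M\,d(\mu-\nu)|\gtrsim\eta^{n-1}$ when $\inf_a d_H=\eta$. There is a structural reason this is not routine. The pairing $\int h_M\,d(\mu-\nu)=n\bigl[V(E_\mu,\dots,E_\mu,M)-V(E_\nu,\dots,E_\nu,M)\bigr]$ is a global quantity, and the inverse Minkowski map is nonlocal. Already for polygons, perturbing a few edge lengths displaces vertices far from those edges. So a Hausdorff defect at a cap need not come with any surface-area defect at the normals of that cap. Establishing $\eta^{n-1}\lesssim d_C$ is precisely the theorem.

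\textbf{The sharpness example.} Your example does not work. Moving one facet of a fixed polytope parallel to itself by $\eta$ keeps the normals and perturbs the facet areas by $O(\eta)$. Hence $d_C$ and $\inf_a d_H$ are both of order $\eta$, and only exponent $1$ is exhibited. The extremal mechanism is the creation of a new small facet. Truncate a vertex of a fixed polytope $L$ at depth $\eps$ to get $K$. This creates a facet of area $\asymp\eps^{n-1}$ and removes $O(\eps^{n-1})$ area from the adjacent facets, so $d_C\le W_1=O(\eps^{n-1})$. Meanwhile $h_L-h_K$ vanishes off the normal cone of that vertex and equals $\eps$ in the truncating direction. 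No translation can compensate this, so $\inf_a d_H\asymp\eps$.
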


\begin{remark}[Normalization]\label{rem:normalization}
The theorem is stated for probability surface area measures. In $\R^3$, surface area measures satisfy $S_{rK}=r^2S_K$. Hence if
\[
r_0=(4\pi)^{-1/2},
\]
then
\begin{equation}\label{eq:prob-normalization}
S_{r_0P_X}=\mu_X,\qquad S_{r_0B}=\sigma.
\end{equation}
This elementary rescaling is the only normalization needed below.
\end{remark}

\subsection{Main quantitative sphericity theorem}

\begin{theorem}[Universal quantitative sphericity]\label{thm:quantitative-sphericity}
There is an absolute constant $C>0$ such that every spherical $t$-design $X_N\subset\Sph^2$ with $t\ge2$ and Minkowski polytope $P_X$ satisfies
\begin{equation}\label{eq:main-H}
\inf_{a\in\R^3}d_H(P_X,a+B)\le\frac{C}{\sqrt{t+1}}.
\end{equation}
If $P_X$ is normalized by its Steiner point, $s(P_X)=0$, then
\begin{equation}\label{eq:main-H-steiner}
d_H(P_X,B)\le\frac{C}{\sqrt{t+1}}.
\end{equation}
No assumption on $N$, separation, covering radius, or spectral conditioning is required.
\end{theorem}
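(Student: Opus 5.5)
The plan is to apply \Cref{thm:BMR} with $n=3$ to the probability measures $\mu=\mu_X$ and $\nu=\sigma$, after the rescaling of \Cref{rem:normalization}, and then undo the scaling.

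First we check the hypotheses. Both measures are centered: $\mu_X$ by \eqref{eq:first-moment}, and $\sigma$ by antipodal symmetry. By \Cref{prop:dispersion}, $\Theta(\mu_X)\ge 1/3$ and $\Theta(\sigma)=1/2$, so we may take $\vartheta=1/3$. The constant $C_{3,1/3}$ is then absolute, independent of $N$, $t$, and any separation data. By \eqref{eq:prob-normalization}, the bodies with surface area measures $\mu_X$ and $\sigma$ are $E_\mu=r_0P_X$ and $E_\nu=r_0B$, up to translation. Translation is harmless because \eqref{eq:BMR-H} already takes an infimum over translations.

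Next we bound the distance $d_C$. Combining \eqref{eq:dc-w1} with \Cref{thm:w1} gives
\[
d_C(\mu_X,\sigma)\le W_1(\mu_X,\sigma)\le \frac{2C_J}{t+1}.
\]
Then \eqref{eq:BMR-H} with exponent $1/(n-1)=1/2$ yields
\[
\inf_a d_H(r_0P_X,a+r_0B)\le C_{3,1/3}\Bigl(\frac{2C_J}{t+1}\Bigr)^{1/2}.
\]
Since $d_H(r_0K,r_0L)=r_0\,d_H(K,L)$, multiplying by $r_0^{-1}=\sqrt{4\pi}$ gives \eqref{eq:main-H} with
\[
C=\sqrt{4\pi}\,C_{3,1/3}\sqrt{2C_J}.
\]

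For the Steiner-normalized statement \eqref{eq:main-H-steiner}, let $a^*$ nearly attain the infimum in \eqref{eq:main-H}. The Steiner point is translation covariant and satisfies $s(B)=0$, so $s(a^*+B)=a^*$. The Steiner point is also Lipschitz in the Hausdorff metric: $s(K)=\frac{3}{4\pi}\int h_K(u)u\,d\omega(u)$, which gives $|s(K)-s(L)|\le c\,\|h_K-h_L\|_\infty$ with $c=3$ in $\R^3$. Hence
\[
|a^*|=|s(a^*+B)-s(P_X)|\le 3\,d_H(P_X,a^*+B),
\]
and so
\[
d_H(P_X,B)\le d_H(P_X,a^*+B)+|a^*|\le 4\,d_H(P_X,a^*+B).
\]
This proves \eqref{eq:main-H-steiner} after enlarging the constant. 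The infimum need not be attained, but a near-minimizer at any $\varepsilon>0$ suffices; alternatively, compactness shows it is attained.

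The main obstacle is confirming that \Cref{thm:BMR} applies exactly as recorded. Specifically, its $d_C$ must be the supremum over bodies in the unit ball, matching \eqref{eq:dc}, and its constant must depend only on $n$ and $\vartheta$. Everything else is bookkeeping: verifying centering and nondegeneracy for the rescaled measures, and getting the Lipschitz constant of the Steiner point. If the cited theorem normalizes $d_C$ differently, the step \eqref{eq:dc-w1} would be redone with a scaled Lipschitz bound. This changes only the absolute constant.
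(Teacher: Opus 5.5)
Your proposal is correct and matches the paper's proof essentially step for step. Both rescale by $r_0=(4\pi)^{-1/2}$, verify centering and $\vartheta=1/3$, chain \Cref{thm:BMR} with $d_C\le W_1\le 2C_J/(t+1)$, and remove the translation using the $3$-Lipschitz bound on the Steiner point, which gives the factor $4$. The only difference is that you write out the constant $C=\sqrt{4\pi}\,C_{3,1/3}\sqrt{2C_J}$ explicitly, whereas the paper leaves it implicit.
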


\begin{proof}
Let $r_0=(4\pi)^{-1/2}$ and set $\widetilde P_X=r_0P_X$, $\widetilde B=r_0B$. By \eqref{eq:prob-normalization}, their surface area measures are $\mu_X$ and $\sigma$. Both measures are centered, and by Proposition~\ref{prop:dispersion} they satisfy the uniform dispersion condition with $\vartheta=1/3$. Applying \Cref{thm:BMR} in ambient dimension $n=3$, then \eqref{eq:dc-w1} and \Cref{thm:w1}, gives
\[
\inf_a d_H(\widetilde P_X,a+\widetilde B)
\le C d_C(\mu_X,\sigma)^{1/2}
\le C W_1(\mu_X,\sigma)^{1/2}
\le \frac{C}{\sqrt{t+1}}.
\]
Scaling by $r_0^{-1}$ proves \eqref{eq:main-H}.

For the Steiner-normalized statement, recall that in $\R^3$
\[
s(K)=\frac{3}{4\pi}\int_{\Sph^2}u\,h_K(u)\,d\omega(u),
\]
so
\begin{equation}\label{eq:steiner-lip}
|s(K)-s(L)|\le3d_H(K,L).
\end{equation}
Choose $a$ such that $d_H(P_X,a+B)\le\eps$, where $\eps$ is arbitrarily close to the infimum in \eqref{eq:main-H}. Since $s(P_X)=s(B)=0$ and $s(a+B)=a$, \eqref{eq:steiner-lip} gives $|a|\le3\eps$. Therefore
\[
d_H(P_X,B)
\le d_H(P_X,a+B)+d_H(a+B,B)
\le4\eps.
\]
Letting $\eps$ decrease to the infimum proves \eqref{eq:main-H-steiner}.
\end{proof}

\begin{corollary}[Quantitative support-function convergence]\label{cor:support-convergence}
Under the Steiner normalization,
\[
\|h_{P_X}-1\|_{L^\infty(\Sph^2)}\le\frac{C}{\sqrt{t+1}}.
\]
\end{corollary}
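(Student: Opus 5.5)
This is immediate from \eqref{eq:main-H-steiner} combined with the support-function form of the Hausdorff metric. Under the Steiner normalization $s(P_X)=0$, and since $h_B\equiv 1$ on $\Sph^2$, identity \eqref{eq:hausdorff-support} gives
\[
\|h_{P_X}-1\|_{L^\infty(\Sph^2)}=\|h_{P_X}-h_B\|_{L^\infty(\Sph^2)}=d_H(P_X,B).
\]
The right-hand side is at most $C/\sqrt{t+1}$ by \eqref{eq:main-H-steiner} in \Cref{thm:quantitative-sphericity}, with the same absolute constant $C$.

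It is worth checking that the normalization is used consistently. \Cref{thm:quantitative-sphericity} fixes the representative of the translation class by $s(P_X)=0$, and the corollary is stated under exactly that normalization, so no further translation argument is needed. Without normalization, only the infimum over translates would be controlled. In that case $h_{P_X}$ would differ from $1$ by a linear term $a\cdot u$; these are degree-one harmonics, and their size is controlled through \eqref{eq:steiner-lip}.

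There is no genuine obstacle here, since all the analytic content is in \Cref{thm:quantitative-sphericity} and in its inputs, \Cref{thm:BMR}, \Cref{thm:w1} and \Cref{prop:dispersion}. The only point to state explicitly is the equality $h_B\equiv1$, which is immediate from the definition of the support function of the unit ball.
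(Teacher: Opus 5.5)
Your proposal is correct and follows the paper's proof: the paper also combines the support-function form of the Hausdorff metric \eqref{eq:hausdorff-support}, with $h_B\equiv1$, and the Steiner-normalized bound \eqref{eq:main-H-steiner} of \Cref{thm:quantitative-sphericity}. Your aside on the unnormalized case is slightly loose, since there $h_{P_X}$ is close to $1+a\cdot u$ rather than differing from $1$ exactly by a linear term, but the argument does not use it.
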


\begin{proof}
Use \eqref{eq:hausdorff-support} and \Cref{thm:quantitative-sphericity}.
\end{proof}

\begin{corollary}[Fraenkel asymmetry]\label{cor:fraenkel}
There exists $C>0$ such that
\begin{equation}\label{eq:fraenkel-rate}
\alpha(P_X,B)\le\frac{C}{(t+1)^{3/4}}.
\end{equation}
\end{corollary}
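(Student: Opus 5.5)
The plan is to apply the Fraenkel part \eqref{eq:BMR-F} of \Cref{thm:BMR}, exactly as the Hausdorff part was used in the proof of \Cref{thm:quantitative-sphericity}, and then undo the rescaling.

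First, let $r_0=(4\pi)^{-1/2}$ and set $\widetilde P_X=r_0P_X$ and $\widetilde B=r_0B$. By \eqref{eq:prob-normalization}, $S_{\widetilde P_X}=\mu_X$ and $S_{\widetilde B}=\sigma$. Both measures are centered probability measures. By \Cref{prop:dispersion}, $\Theta(\mu_X)\ge 1/3$ and $\Theta(\sigma)=1/2\ge1/3$, so the hypotheses of \Cref{thm:BMR} hold with $n=3$ and $\vartheta=1/3$. The constant $C_{3,1/3}$ is therefore absolute.

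Second, apply \eqref{eq:BMR-F} with exponent $1+1/(n-1)=3/2$, then use \eqref{eq:dc-w1} and \Cref{thm:w1}:
\[
\alpha(\widetilde P_X,\widetilde B)^2\le C\,d_C(\mu_X,\sigma)^{3/2}\le C\,W_1(\mu_X,\sigma)^{3/2}\le C\Bigl(\frac{2C_J}{t+1}\Bigr)^{3/2}.
\]
Taking square roots gives $\alpha(\widetilde P_X,\widetilde B)\le C(t+1)^{-3/4}$.

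Third, check that $\alpha$ is invariant under a common dilation of both bodies. Replace $(a,r)$ by $(a/r_0,r)$ in \eqref{eq:fraenkel}. The constraint $r^3|L|=|K|$ is preserved when $K$ and $L$ are scaled by the same factor. The symmetric-difference volume and $|K|$ both scale by $r_0^3$, so their ratio is unchanged. Hence $\alpha(P_X,B)=\alpha(\widetilde P_X,\widetilde B)$, which proves \eqref{eq:fraenkel-rate}.

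There is no real obstacle; the only points needing care are the exponent bookkeeping and this scale invariance. One subtlety is that the infimum in \eqref{eq:fraenkel} uses the volume-matching dilation $r$ of $B$ rather than $B$ itself. This is consistent with the definition of $\alpha(P_X,B)$, so no volume comparison between $P_X$ and $B$ is needed.
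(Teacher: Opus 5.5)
Your proposal is correct and follows the paper's own proof. You apply \eqref{eq:BMR-F} with $n=3$ to the probability-normalized bodies $r_0P_X$ and $r_0B$, chain through $d_C\le W_1\le 2C_J/(t+1)$, take square roots, and use invariance of $\alpha$ under common dilations. The paper does the same; your explicit checks of the hypotheses (with $\vartheta=1/3$) and of the dilation invariance spell out what it states in one line.
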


\begin{proof}
Apply \eqref{eq:BMR-F} with $n=3$ to the probability-normalized bodies. Since Fraenkel asymmetry is invariant under common dilations,
\[
\alpha(P_X,B)^2
\le C d_C(\mu_X,\sigma)^{3/2}
\le C W_1(\mu_X,\sigma)^{3/2}
\le C(t+1)^{-3/2}.
\]
Taking square roots proves \eqref{eq:fraenkel-rate}.
\end{proof}

\begin{remark}[Universal input versus structured geometry]\label{rem:blackbox}
The inverse stability theorem is an external convex-geometric input. The contribution here is to show that spherical designs form a highly structured atomic class for which its hypotheses are satisfied automatically and uniformly, and to identify precisely which information is already forced by design exactness. The $W_1$ scale $t^{-1}$ and the dispersion bound $\Theta\ge1/3$ are intrinsic consequences of the design equations. Any improvement of the exponent $1/2$ must therefore use structure beyond the general nondegeneracy mechanism, such as high-order moments, spectral conditioning, or regularity of the normal fan.
\end{remark}

\begin{remark}[Where the exponent $1/2$ comes from]\label{rem:exponent-half}
The measure discrepancy is already of order $t^{-1}$. The loss to $t^{-1/2}$ occurs only when one inverts the nonlinear surface-area map. The exponent $1/(n-1)$ in the general quantitative Minkowski theorem is sharp, so an improvement within the design class would have to exploit its additional algebraic or spectral rigidity rather than measure nondegeneracy alone.
\end{remark}

\section{Projection bodies and the full approximation scale}\label{sec:projection}

The universal Hausdorff estimate for $P_X$ loses a square root through nonlinear inverse Minkowski stability. For the projection body the surface-area measure enters linearly, so the full $O(t^{-1})$ scale is retained.

\begin{theorem}[Uniform brightness estimate]\label{thm:brightness}
Let $X_N$ be a spherical $t$-design with Minkowski polytope $P_X$. Then for every $v\in\Sph^2$,
\begin{equation}\label{eq:brightness}
|b_{P_X}(v)-\pi|\le\frac{4\pi C_J}{t+1}.
\end{equation}
Consequently,
\begin{equation}\label{eq:projection-rate}
d_H(\Pi P_X,\pi B)\le\frac{4\pi C_J}{t+1}.
\end{equation}
\end{theorem}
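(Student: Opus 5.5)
We express both brightness functions through Cauchy's formula \eqref{eq:cauchy} and compare the two integrals against $\mu_X-\sigma$. By \eqref{eq:realization} and $S_B=4\pi\sigma$, for each fixed $v\in\Sph^2$,
\[
b_{P_X}(v)-b_B(v)=\frac12\int_{\Sph^2}|u\cdot v|\,d(S_{P_X}-S_B)(u)=2\pi\int_{\Sph^2}f_v\,d(\mu_X-\sigma),\qquad f_v(u):=|u\cdot v|.
\]
A direct computation using the rotational reduction from the proof of \Cref{prop:dispersion} gives $b_B(v)=\frac12\cdot4\pi\cdot\frac12=\pi$. This is consistent with the projection of the unit ball being a unit disk.

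Next we bound the integral uniformly in $v$. The function $f_v$ is $1$-Lipschitz on $\Sph^2$, since $\bigl||u\cdot v|-|w\cdot v|\bigr|\le|u-w|\le d_{\Sph^2}(u,w)$. There are two equivalent routes, and either one suffices:
\begin{itemize}
\item Apply the Jackson estimate \eqref{eq:jackson} to $f_v$ and obtain $p_t\in\Pi_t$ with $\|f_v-p_t\|_\infty\le C_J/(t+1)$. Design exactness then gives $\bigl|\int f_v\,d(\mu_X-\sigma)\bigr|\le 2C_J/(t+1)$, exactly as in the proof of \Cref{thm:w1}.
\item Invoke \Cref{thm:w1} directly through Kantorovich--Rubinstein duality.
\end{itemize}
Multiplying by $2\pi$ yields $|b_{P_X}(v)-\pi|\le 4\pi C_J/(t+1)$, which is \eqref{eq:brightness}. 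The constant is uniform in $v$ because the Lipschitz bound is.

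Finally, for \eqref{eq:projection-rate} we identify support functions. By \eqref{eq:projection-body}, $h_{\Pi P_X}=b_{P_X}$, and $\Pi B$ has support function $b_B\equiv\pi$, so $\Pi B=\pi B$. The Hausdorff identity \eqref{eq:hausdorff-support} turns the uniform bound into $d_H(\Pi P_X,\pi B)=\|b_{P_X}-\pi\|_\infty\le 4\pi C_J/(t+1)$. No translation normalization is needed, because both projection bodies are origin-symmetric and $b_K$ is translation invariant.

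There is no real obstacle here. The only point to check is the Lipschitz property of $f_v$ with respect to geodesic distance, since $f_v$ is not smooth on the great circle $v^\perp$. The Jackson inequality as stated requires only Lipschitz regularity, so this causes no trouble, and the bookkeeping of the constants $\tfrac12\cdot4\pi\cdot 2C_J$ gives $4\pi C_J$.
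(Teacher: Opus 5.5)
Your proposal is correct and follows essentially the paper's proof. You write $b_{P_X}(v)-\pi=2\pi\int|u\cdot v|\,d(\mu_X-\sigma)$ via Cauchy's formula, bound it by $2\pi W_1(\mu_X,\sigma)$ using that $u\mapsto|u\cdot v|$ is $1$-Lipschitz together with \Cref{thm:w1}, and then pass to $d_H(\Pi P_X,\pi B)$ through $h_{\Pi P_X}=b_{P_X}$ and the support-function formula for the Hausdorff distance. Your additional remarks, namely the direct Jackson route and the fact that $\Pi P_X$ does not depend on the chosen translate, are correct but not needed.
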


\begin{proof}
By Cauchy's formula and \eqref{eq:realization},
\[
b_{P_X}(v)=2\pi\int_{\Sph^2}|u\cdot v|\,d\mu_X(u).
\]
For the unit ball,
\[
\pi=b_B(v)=2\pi\int_{\Sph^2}|u\cdot v|\,d\sigma(u).
\]
The function $u\mapsto|u\cdot v|$ is $1$-Lipschitz on the sphere. Therefore
\[
|b_{P_X}(v)-\pi|
\le2\pi W_1(\mu_X,\sigma)
\le\frac{4\pi C_J}{t+1}.
\]
Taking the supremum over $v$ and using $h_{\Pi P_X}=b_{P_X}$ proves \eqref{eq:projection-rate}.
\end{proof}

The brightness discrepancy retains the harmonic gap of the design.

\begin{proposition}[Low-frequency cancellation of brightness]\label{prop:brightness-gap}
Define
\[
\beta_X(v):=b_{P_X}(v)-\pi.
\]
Then
\begin{equation}\label{eq:brightness-gap}
\Pi_{\le t}\beta_X=0.
\end{equation}
In fact, $\beta_X$ is even and therefore has only even-degree spherical harmonics.
\end{proposition}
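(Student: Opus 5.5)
The plan is to write $\beta_X$ as the image of the signed measure $\mu_X-\sigma$ under the cosine transform and then use that this transform is a zonal convolution, hence diagonal on spherical harmonics. By Cauchy's formula and \eqref{eq:realization}, exactly as in the proof of \Cref{thm:brightness},
\[
\beta_X(v)=2\pi\int_{\Sph^2}|u\cdot v|\,d(\mu_X-\sigma)(u).
\]
Evenness is immediate, because $|u\cdot(-v)|=|u\cdot v|$ gives $\beta_X(-v)=\beta_X(v)$. Since $Y(-v)=(-1)^\ell Y(v)$ for $Y\in\Harm_\ell$, every odd-degree component of $\beta_X$ vanishes.

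For the low even degrees I will use the Funk--Hecke formula. Fix $Y\in\Harm_\ell$ with $\ell\le t$. The kernel $|u\cdot v|$ is a bounded zonal function of $u\cdot v$, so Funk--Hecke gives
\[
\int_{\Sph^2}|u\cdot v|\,Y(v)\,d\sigma(v)=\lambda_\ell\,Y(u),
\qquad
\lambda_\ell=\frac12\int_{-1}^1|s|\,P_\ell(s)\,ds .
\]
Fubini is justified because the measure $\mu_X-\sigma$ is finite and the kernel is bounded. It yields
\[
\int\beta_X\,Y\,d\sigma=2\pi\lambda_\ell\int Y\,d(\mu_X-\sigma).
\]
For $1\le\ell\le t$ the right-hand side vanishes by the harmonic characterization \eqref{eq:harmonic-design}. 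For $\ell=0$ it vanishes because both measures are probability measures. This shows $\Pi_{\le t}\beta_X=0$.

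The argument has no serious obstacle. The only point that needs care is measure-theoretic: the Funk--Hecke identity and the interchange of integrals must be applied to a signed measure that includes atoms. This is harmless, because the integrand is continuous and $\mu_X$ is a finite sum of point masses, so $\int|u\cdot v|\,d\mu_X(u)$ is a finite sum. Two facts fall out as a byproduct: $\lambda_\ell=0$ for odd $\ell\ge1$, which gives a second proof of evenness, and $\lambda_\ell\ne0$ for even $\ell$. The latter means the first possibly nonzero harmonic of $\beta_X$ occurs in the smallest even degree greater than $t$.
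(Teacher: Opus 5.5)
Your proposal is correct and follows essentially the same route as the paper: it writes $\beta_X=2\pi\int_{\Sph^2}|u\cdot v|\,d(\mu_X-\sigma)(u)$, uses Funk--Hecke to see that the cosine transform acts as a scalar on each $\Harm_\ell$, and then applies the harmonic design conditions together with evenness of the kernel. Your version is slightly more complete than the paper's. It explicitly treats the degree-zero component, which vanishes because both measures have unit mass, and it justifies the Fubini interchange; the paper leaves both points implicit.
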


\begin{proof}
Let $\mathcal C$ be the spherical cosine transform
\[
(\mathcal C\nu)(v):=\int_{\Sph^2}|u\cdot v|\,d\nu(u).
\]
Then
\[
\beta_X=2\pi\mathcal C(\mu_X-\sigma).
\]
The transform $\mathcal C$ commutes with rotations, and by the Funk--Hecke theorem it acts as a scalar on each spherical harmonic space $\Harm_\ell$. Since $\mu_X-\sigma$ has zero coefficients in $\Harm_\ell$ for $1\le\ell\le t$, so does $\beta_X$. The kernel $|u\cdot v|$ is even in both variables, hence the odd-degree multipliers vanish.
\end{proof}

\begin{corollary}[Uniform control of mixed area functionals]\label{cor:mixed-area}
Let $L\in\K^3$ and suppose, after translation, that $L\subset RB$. Then
\[
|V(P_X,P_X,L)-V(B,B,L)|\le\frac{8\pi C_JR}{3(t+1)}.
\]
If $h_L\in\Pi_t(\Sph^2)$, the left-hand side is exactly zero.
\end{corollary}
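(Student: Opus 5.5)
The plan is to reduce the statement to results already established, via the integral representation \eqref{eq:mixed-volume} of the mixed volume. First, I will write the difference as in \eqref{eq:mixed-difference}. Since $S_{P_X}=4\pi\mu_X$ by \Cref{thm:realization} and $S_B=4\pi\sigma$, we get
\[
V(P_X,P_X,L)-V(B,B,L)=\frac{4\pi}{3}\int_{\Sph^2}h_L\,d(\mu_X-\sigma).
\]
This identity does not depend on translating $P_X$, because $S_{P_X}$ is translation invariant. It also lets us translate $L$ freely. Replacing $L$ by $L+a$ adds $a\cdot u$ to $h_L$, and $\int a\cdot u\,d(\mu_X-\sigma)=0$ by \eqref{eq:first-moment} and the centering of $\sigma$. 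So both sides are unchanged, and we may assume $L\subset RB$ without loss of generality.

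With $L\subset RB$, the support function satisfies $|h_L(u)-h_L(v)|\le R|u-v|\le R\,d_{\Sph^2}(u,v)$, so $\Lip(h_L)\le R$. For the quantitative bound I have two equivalent routes, and I will take the simplest.

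\emph{Route A.} Invoke \Cref{cor:mixed-lip} directly. It already gives exactly $\frac{8\pi C_JR}{3(t+1)}$: apply the Jackson estimate \eqref{eq:jackson} to $h_L$, then use the bound $2\|h_L-p_t\|_\infty$ from \Cref{thm:mixed-exact}.

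\emph{Route B.} Use Kantorovich--Rubinstein with \Cref{thm:w1}:
\[
\Bigl|\int h_L\,d(\mu_X-\sigma)\Bigr|\le R\,W_1(\mu_X,\sigma)\le\frac{2C_JR}{t+1}.
\]
Multiplying by $4\pi/3$ gives the same constant.

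The exactness statement is immediate. If $h_L\in\Pi_t(\Sph^2)$, design exactness \eqref{eq:design} kills the integral, as already recorded in \eqref{eq:mixed-exact}.

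There is no real obstacle here. The only point requiring care is the legitimacy of the translation step, namely that the difference is translation invariant in $L$. This uses degree-one exactness, and it is what justifies the phrase ``after translation'' in the statement.
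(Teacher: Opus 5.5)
Your proposal is correct and is essentially the paper's proof, which simply cites \Cref{thm:mixed-exact} and \Cref{cor:mixed-lip} (your Route A). Route B via \Cref{thm:w1} is the same Jackson-plus-exactness argument routed through Kantorovich--Rubinstein duality, and it gives the identical constant; note also that translation invariance in $L$ needs no design input, since $V(K,K,L+a)=V(K,K,L)$ for every convex body $K$ because $\int_{\Sph^2}u\,dS_K(u)=0$.
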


\begin{proof}
This is Theorem~\ref{thm:mixed-exact} and Corollary~\ref{cor:mixed-lip}.
\end{proof}

\begin{remark}[Two geometric scales]\label{rem:two-scales}
\Cref{thm:quantitative-sphericity,thm:brightness} isolate two different mechanisms. The scale $t^{-1}$ is inherited directly from approximation of the surface-area measure and is retained by geometric observables that depend linearly on that measure. The universal $t^{-1/2}$ Hausdorff scale appears only in reconstructing the convex body itself through the nonlinear inverse Minkowski map.
\end{remark}

\section{Surface-area measures and harmonic shape modes}\label{sec:modes}

This section gives a complementary interpretation of the preceding geometry. The main results of \Cref{sec:realization,sec:isotropy,sec:quantitative,sec:projection} do not depend on local PDE analysis. Rather, the spherical Minkowski--Monge--Amp\`ere formulation explains how the exact moment conditions act on infinitesimal shape modes near the ball.

\subsection{The Aleksandrov surface-area Monge--Amp\`ere measure}
For a continuous function $h:\Sph^2\to\R$, define its Wulff shape
\begin{equation}\label{eq:wulff}
K[h]:=\bigcap_{u\in\Sph^2}\{x\in\R^3:x\cdot u\le h(u)\}.
\end{equation}
Whenever $K[h]\in\K^3_\circ$ and $h=h_{K[h]}$, define
\begin{equation}\label{eq:MA-def}
\MA_{\Sph^2}[h]:=S_{K[h]}.
\end{equation}
This is the Aleksandrov spherical Monge--Amp\`ere measure associated with the support function $h$.

If $h\in C^2(\Sph^2)$ and
\begin{equation}\label{eq:Qh}
Q_h:=\nabla^2_{\Sph^2}h+h g_{\Sph^2}>0
\end{equation}
as a quadratic form on every tangent space, then $h$ is the support function of a smooth strictly convex body and
\begin{equation}\label{eq:smooth-MA}
d\MA_{\Sph^2}[h]
=\det(\nabla^2_{\Sph^2}h+h g_{\Sph^2})\,d\omega.
\end{equation}
For nonsmooth support functions, \eqref{eq:MA-def} is the weak form of the same equation; see Schneider~\cite[Chapter~8]{Schneider2014} and Cheng--Yau~\cite{ChengYau1976}.

\begin{theorem}[Surface-area Monge--Amp\`ere characterization]\label{thm:MA-char}
Let $X_N$ be a spherical $t$-design with $t\ge2$, let $P_X$ be its Minkowski polytope, and let $h_X=h_{P_X}$. Then
\begin{equation}\label{eq:MA-design}
\MA_{\Sph^2}[h_X]=\frac{4\pi}{N}\sum_{j=1}^N\delta_{x_j}.
\end{equation}
For the unit ball $B$, $h_B\equiv1$ and
\begin{equation}\label{eq:MA-ball}
\MA_{\Sph^2}[1]=\omega.
\end{equation}
Moreover, the following are equivalent:
\begin{enumerate}[label=(\roman*)]
\item $X_N$ is a spherical $t$-design;
\item for every $p\in\Pi_t(\Sph^2)$,
\begin{equation}\label{eq:MA-moments}
\int_{\Sph^2}p\,d\MA_{\Sph^2}[h_X]
=\int_{\Sph^2}p\,d\MA_{\Sph^2}[1];
\end{equation}
\item for every $1\le\ell\le t$ and every $Y\in\Harm_\ell$,
\begin{equation}\label{eq:MA-harmonics}
\int_{\Sph^2}Y\,d\bigl(\MA_{\Sph^2}[h_X]-\MA_{\Sph^2}[1]\bigr)=0.
\end{equation}
\end{enumerate}
\end{theorem}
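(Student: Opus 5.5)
The proof reduces to the realization theorem and the definition of $\MA_{\Sph^2}$, followed by a direct comparison of the design condition with its harmonic form.

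\emph{Identity \eqref{eq:MA-design}.} By \Cref{thm:realization}, $P_X\in\K^3_\circ$ and $S_{P_X}=4\pi\mu_X$. The Wulff shape of a support function is the body itself, $K[h_X]=P_X$, and $h_X=h_{K[h_X]}$, so definition \eqref{eq:MA-def} applies and gives $\MA_{\Sph^2}[h_X]=S_{P_X}$. This is \eqref{eq:MA-design}.

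\emph{Identity \eqref{eq:MA-ball}.} Take $h\equiv1$. Then $K[1]=B$ and $Q_1=g_{\Sph^2}>0$, so the smooth formula \eqref{eq:smooth-MA} gives density $\det g=1$ with respect to $\omega$. Alternatively, $S_B=\omega$ directly.

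\emph{(i)$\Leftrightarrow$(ii).} Dividing \eqref{eq:MA-moments} by $4\pi$ turns it into $\int p\,d\mu_X=\int p\,d\sigma$, since $\MA[h_X]=4\pi\mu_X$ and $\MA[1]=\omega=4\pi\sigma$. This is exactly \eqref{eq:design}. (Equivalently, this is \Cref{thm:converse} applied to $P=P_X$.)

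\emph{(ii)$\Leftrightarrow$(iii).} We use the decomposition $\Pi_t(\Sph^2)=\bigoplus_{\ell=0}^t\Harm_\ell$ of restricted polynomials into spherical harmonics.
\begin{itemize}
\item (ii)$\Rightarrow$(iii): each $Y\in\Harm_\ell$ with $\ell\le t$ lies in $\Pi_t$, so \eqref{eq:MA-moments} applies to it.
\item (iii)$\Rightarrow$(ii): write $p=\sum_{\ell\le t}Y_\ell$. The terms with $1\le\ell\le t$ cancel by \eqref{eq:MA-harmonics}.
\end{itemize}
The degree-zero term needs separate care, because (iii) says nothing about constants. Constants require the two total masses to agree: $\MA[h_X](\Sph^2)=4\pi=\omega(\Sph^2)$. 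This holds by \eqref{eq:surface-total}. The normalization is built into the statement, since $h_X$ is by definition the support function of the Minkowski polytope; so (iii) implies (ii) in this setting.

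The only real subtlety is keeping (i) from being circular: $P_X$ is defined assuming a design. I would read the equivalence as applying to any node set $X_N$ for which the Minkowski body with $S=4\pi\mu_X$ exists. This is the setting of \Cref{thm:converse}, in which the total mass is automatically $4\pi$.
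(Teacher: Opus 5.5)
Your proposal is correct and follows the paper's proof: \eqref{eq:MA-design} and \eqref{eq:MA-ball} come from the definition \eqref{eq:MA-def}, \Cref{thm:realization}, and $S_B=\omega$; (i)$\Leftrightarrow$(ii) comes from dividing by $4\pi$; and (ii)$\Leftrightarrow$(iii) comes from the harmonic decomposition of $\Pi_t(\Sph^2)$. You also check explicitly that the degree-zero term is handled by the equal total masses $4\pi$ from \eqref{eq:surface-total}, and you read the equivalence in a non-circular way; the paper leaves both of these points implicit.
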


\begin{proof}
Equations \eqref{eq:MA-design} and \eqref{eq:MA-ball} follow from \eqref{eq:MA-def}, \Cref{thm:realization}, and $S_B=\omega$. Dividing both measures by $4\pi$ turns \eqref{eq:MA-moments} into the spherical design condition \eqref{eq:design}. The equivalence with \eqref{eq:MA-harmonics} is the standard harmonic decomposition of $\Pi_t(\Sph^2)$.
\end{proof}

\begin{remark}[Translation invariance]\label{rem:MA-translation}
Adding a linear function $a\cdot u$ to a support function translates the body. Since
\[
\nabla^2_{\Sph^2}(a\cdot u)+(a\cdot u)g_{\Sph^2}=0,
\]
the smooth operator in \eqref{eq:smooth-MA}, and hence the Aleksandrov measure $\MA_{\Sph^2}[h]$, is invariant under this addition. This is the local operator-theoretic origin of the translation ambiguity in Minkowski's theorem.
\end{remark}

\subsection{Linearization at the unit ball}
We use the sign convention
\begin{equation}\label{eq:laplace-sign}
\Delta_{\Sph^2}Y=-\ell(\ell+1)Y,\qquad Y\in\Harm_\ell.
\end{equation}

\begin{proposition}[Linearized Minkowski operator]\label{prop:linearized}
Let
\[
\mathcal M(h):=\det(\nabla^2_{\Sph^2}h+h g_{\Sph^2}).
\]
Then
\begin{equation}\label{eq:linearized}
D\mathcal M(1)[\varphi]=(\Delta_{\Sph^2}+2)\varphi.
\end{equation}
On $\Harm_\ell$ the linearized operator acts by the scalar
\begin{equation}\label{eq:linearized-spectrum}
2-\ell(\ell+1).
\end{equation}
Its kernel is exactly $\Harm_1$, the space of infinitesimal translations.
\end{proposition}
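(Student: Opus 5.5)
The plan is to compute the linearization directly in a local orthonormal frame and then read off the spectrum from the harmonic decomposition. Set $h_s=1+s\varphi$ for $\varphi\in C^2(\Sph^2)$. Then
\[
\nabla^2_{\Sph^2}h_s+h_sg_{\Sph^2}=g_{\Sph^2}+s\bigl(\nabla^2_{\Sph^2}\varphi+\varphi g_{\Sph^2}\bigr).
\]
Since $\nabla^2 1=0$, the linearization is taken about the identity form $g_{\Sph^2}$. I would not work with the full formula for $\mathcal M(h)$; only its first variation at the identity is needed.

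For the derivative, use $\det(I+sA)=1+s\tr A+O(s^2)$ pointwise in an orthonormal frame of $T_u\Sph^2$. This gives
\[
D\mathcal M(1)[\varphi]=\tr_g\bigl(\nabla^2\varphi+\varphi g\bigr)=\Delta_{\Sph^2}\varphi+2\varphi,
\]
because $\tr_g\nabla^2\varphi=\Delta_{\Sph^2}\varphi$ and $\tr_g g=2$ on the $2$-dimensional tangent space. The $O(s^2)$ remainder is uniform on $\Sph^2$ because $\varphi\in C^2$, so this is a genuine Fr\'echet derivative from $C^2$ to $C^0$.

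For the spectrum, apply the sign convention \eqref{eq:laplace-sign}. On $\Harm_\ell$ the operator acts by $2-\ell(\ell+1)$.

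For the kernel, note that $2-\ell(\ell+1)=0$ with $\ell\ge0$ integer forces $\ell=1$. Every other degree gives a nonzero scalar: $2$ for $\ell=0$, and a negative number for $\ell\ge2$. To treat a general $\varphi$ in the kernel (in $C^2$, or in $L^2$ distributionally), expand it in spherical harmonics. By completeness of $\bigoplus_\ell\Harm_\ell$ in $L^2(\Sph^2)$, every component of degree $\ell\ne1$ must vanish, so $\varphi\in\Harm_1$. Conversely, $\Harm_1=\{u\mapsto a\cdot u\}$ lies in the kernel, and by \Cref{rem:MA-translation} these functions are exactly the infinitesimal translations.

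The main obstacle is mild: justifying the identification $\tr_g\nabla^2=\Delta_{\Sph^2}$ consistently with the sign convention \eqref{eq:laplace-sign}. As a sanity check I would verify directly that $\nabla^2(a\cdot u)=-(a\cdot u)g$, which gives $\Delta(a\cdot u)=-2(a\cdot u)$ and confirms the eigenvalue $-\ell(\ell+1)$ at $\ell=1$.
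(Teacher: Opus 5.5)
Your proposal is correct and follows the same route as the paper: perturb $h=1+s\varphi$, use that the derivative of the determinant at the identity is the trace in an orthonormal frame to get $\Delta_{\Sph^2}\varphi+2\varphi$, then read off the multiplier $2-\ell(\ell+1)$ on $\Harm_\ell$, which vanishes only at $\ell=1$. Your extra details make explicit what the paper leaves implicit: the Fr\'echet remainder, which in dimension two is exactly $s^2\det(\nabla^2\varphi+\varphi g)$; the use of completeness of $\bigoplus_\ell\Harm_\ell$ for the kernel; and the check $\nabla^2(a\cdot u)=-(a\cdot u)g$.
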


\begin{proof}
For $h_\eps=1+\eps\varphi$,
\[
\nabla^2_{\Sph^2}h_\eps+h_\eps g
=g+\eps(\nabla^2_{\Sph^2}\varphi+\varphi g).
\]
In an orthonormal tangent frame, the derivative of the determinant at the identity is the trace. Hence
\[
D\mathcal M(1)[\varphi]
=\tr(\nabla^2_{\Sph^2}\varphi+\varphi g)
=\Delta_{\Sph^2}\varphi+2\varphi.
\]
Equation \eqref{eq:linearized-spectrum} follows from \eqref{eq:laplace-sign}. The eigenvalue vanishes precisely for $\ell=1$.
\end{proof}

\begin{corollary}[Linearized suppression of low shape modes]\label{cor:mode-suppression}
Let $h_s=1+s\varphi+O(s^2)$ be a differentiable family of smooth support functions satisfying
\[
\mathcal M(h_s)=1+s f+O(s^2),
\qquad
\Pi_{\Harm_1}\varphi=0.
\]
Then
\begin{equation}\label{eq:first-response}
(\Delta_{\Sph^2}+2)\varphi=f.
\end{equation}
If $\Pi_{\le t}f=0$ for some $t\ge2$, then $\Pi_{\le t}\varphi=0$ and
\begin{equation}\label{eq:response-bound}
\|\varphi\|_{L^2(\sigma)}
\le\frac1{(t+1)(t+2)-2}\|f\|_{L^2(\sigma)}.
\end{equation}
\end{corollary}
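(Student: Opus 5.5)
The plan is to differentiate the Monge--Amp\`ere identity at $s=0$, invoke \Cref{prop:linearized}, and then argue one spherical-harmonic degree at a time. Since $h_s=1+s\varphi+O(s^2)$ is a differentiable family of smooth support functions, the map $s\mapsto\mathcal M(h_s)$ is differentiable. Its derivative at $s=0$ is $D\mathcal M(1)[\varphi]$ by the chain rule: $\mathcal M$ is a polynomial in the entries of $\nabla^2_{\Sph^2}h+hg_{\Sph^2}$ in an orthonormal frame. Comparing with $\mathcal M(h_s)=1+sf+O(s^2)$ and using \eqref{eq:linearized} gives \eqref{eq:first-response}, namely $(\Delta_{\Sph^2}+2)\varphi=f$.

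Next, expand $\varphi=\sum_{\ell\ge0}\varphi_\ell$ and $f=\sum_\ell f_\ell$ in $L^2(\sigma)$, with $\varphi_\ell,f_\ell\in\Harm_\ell$. The operator $\Delta_{\Sph^2}+2$ is diagonal in this decomposition. By \eqref{eq:laplace-sign} and \eqref{eq:linearized-spectrum}, equation \eqref{eq:first-response} is equivalent to
\[
\bigl(2-\ell(\ell+1)\bigr)\varphi_\ell=f_\ell\qquad\text{for every }\ell\ge0.
\]
Now assume $\Pi_{\le t}f=0$. The degrees split into three cases.
\begin{enumerate}
\item For $\ell=0$ the multiplier is $2$, so $\varphi_0=f_0/2=0$.
\item For $\ell=1$ the multiplier vanishes and carries no information. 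Here we use the hypothesis $\Pi_{\Harm_1}\varphi=0$ directly.
\item For $2\le\ell\le t$ the multiplier $2-\ell(\ell+1)$ is nonzero, so $f_\ell=0$ forces $\varphi_\ell=0$.
\end{enumerate}
Together these give $\Pi_{\le t}\varphi=0$.

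For \eqref{eq:response-bound}, only degrees $\ell\ge t+1$ remain. There
\[
|2-\ell(\ell+1)|=\ell(\ell+1)-2\ge(t+1)(t+2)-2>0,
\]
since $\ell(\ell+1)$ is increasing in $\ell$ and $t\ge2$. Hence $\|\varphi_\ell\|_{L^2(\sigma)}\le\|f_\ell\|_{L^2(\sigma)}/\bigl((t+1)(t+2)-2\bigr)$ for each such $\ell$. Summing the squares and using Parseval (orthogonality of the spaces $\Harm_\ell$ in $L^2(\sigma)$) yields \eqref{eq:response-bound}.

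The only step needing care is the first: the $O(s^2)$ remainders must be understood in a topology for which the first-order coefficients can legitimately be identified. I would read them as uniform on $\Sph^2$ together with two derivatives. Then the identification holds pointwise, and since $\varphi$ and $f$ are continuous they lie in $L^2(\sigma)$, so the harmonic expansions are valid. Everything after that is diagonal bookkeeping.
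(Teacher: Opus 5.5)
Your proposal is correct and follows the same route as the paper. The paper differentiates at $s=0$ via Proposition~\ref{prop:linearized}, reads off $\widehat\varphi(\ell,m)=\widehat f(\ell,m)/(2-\ell(\ell+1))$ for $\ell\ne1$ with the $\Harm_1$ part killed by hypothesis, and applies Parseval with $|2-\ell(\ell+1)|\ge(t+1)(t+2)-2$ for $\ell\ge t+1$. Your explicit handling of the $\ell=0$ mode and your choice of a $C^2$-uniform reading of the $O(s^2)$ remainders only make precise what the paper leaves implicit.
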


\begin{proof}
Differentiating at $s=0$ and using Proposition~\ref{prop:linearized} gives \eqref{eq:first-response}. Expanding in spherical harmonics yields
\[
\widehat\varphi(\ell,m)
=\frac{\widehat f(\ell,m)}{2-\ell(\ell+1)},\qquad \ell\ne1.
\]
The asserted cancellation follows immediately. For $\ell\ge t+1$ one has
$|2-\ell(\ell+1)|\ge (t+1)(t+2)-2$, and Parseval's identity gives \eqref{eq:response-bound}.
\end{proof}

\begin{remark}[Affine and ellipsoidal modes]\label{rem:ellipsoidal}
The degree-zero mode changes scale, while the degree-one modes translate the body and form the kernel of the linearized operator. Degree-two harmonics are the first genuine anisotropic modes; infinitesimal volume-preserving linear images of the ball have support-function variation in $\Harm_2$. Hence a spherical design of strength $t\ge2$ eliminates, in the linearized Minkowski response, the entire ellipsoidal anisotropy. Higher design strength removes progressively higher-order shape modes.
\end{remark}

\begin{remark}[Nonlinear mode generation]\label{rem:nonlinear-generation}
Corollary~\ref{cor:mode-suppression} is an infinitesimal statement. The determinant is nonlinear, so interactions among high-frequency modes may generate lower frequencies at quadratic and higher orders. No exact low-frequency cancellation for the full support function is asserted. The universal estimates of \Cref{sec:quantitative} are independent of this linearization and are proved directly at the level of surface-area measures.
\end{remark}

\section{Affine covariance and symmetry}\label{sec:affine}

Ordinary spherical designs are invariant under $O(3)$, not under the full linear group. The Minkowski realization makes the correct affine statement transparent: surface area measures are contravariant with an explicit weight.

\begin{proposition}[Affine transformation of the Minkowski polytope]\label{prop:affine}
Let $A\in GL(3)$ and let $P_X$ have facet normals $x_j$ and facet areas $a_j=4\pi/N$. Then the corresponding facets of $AP_X$ have outer unit normals
\begin{equation}\label{eq:affine-normal}
x_j^A=\frac{A^{-T}x_j}{|A^{-T}x_j|}
\end{equation}
and areas
\begin{equation}\label{eq:affine-area}
a_j^A=|\det A|\,|A^{-T}x_j|\,a_j.
\end{equation}
Consequently,
\begin{equation}\label{eq:affine-SA}
S_{AP_X}
=\frac{4\pi|\det A|}{N}
\sum_{j=1}^N|A^{-T}x_j|\,
\delta_{A^{-T}x_j/|A^{-T}x_j|}.
\end{equation}
The transformed facet-area vectors remain balanced:
\begin{equation}\label{eq:affine-balance}
\sum_{j=1}^Na_j^Ax_j^A=0.
\end{equation}
\end{proposition}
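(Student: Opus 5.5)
The argument is elementary linear algebra applied facet by facet, followed by the divergence theorem for the balance identity. Write each facet as $F_j=P_X\cap\{x: x\cdot x_j=h_{P_X}(x_j)\}$. Since $A$ is an affine bijection, $AP_X$ is a polytope whose facets are exactly the images $AF_j$, so no facets appear or merge.

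\emph{Normals.} For $y=Ax$ we have $x\cdot x_j=(A^{-1}y)\cdot x_j=y\cdot(A^{-T}x_j)$. Hence
\[
AP_X\subset\{y: y\cdot A^{-T}x_j\le h_{P_X}(x_j)\},
\]
and equality holds exactly on $AF_j$. Thus $A^{-T}x_j$ is an outer normal of the facet $AF_j$. After normalization this gives \eqref{eq:affine-normal}, and the outward orientation is preserved because the inequality direction is unchanged.

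\emph{Areas.} I would use the standard transformation rule for oriented area vectors: for a planar region $F$ with unit normal $n$, the area vector of $AF$ is $\operatorname{cof}(A)\,\Area(F)\,n=|\det A|\,A^{-T}n\,\Area(F)$, up to the sign of $\det A$, which is fixed by the outward orientation. One way to justify this is to take two edge vectors $e_1,e_2$ spanning a parallelogram in the facet plane, with $e_1\times e_2=\Area\cdot n$. Then
\[
Ae_1\times Ae_2=\operatorname{cof}(A)(e_1\times e_2)=\det(A)A^{-T}(e_1\times e_2).
\]
Taking norms, and using additivity of area, gives
\[
a_j^A=|\det A|\,|A^{-T}x_j|\,a_j,
\]
which is \eqref{eq:affine-area}. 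The surface area measure formula \eqref{eq:affine-SA} then follows from \eqref{eq:polytope-SA} with $a_j=4\pi/N$.

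\emph{Balance.} The balance identity \eqref{eq:affine-balance} holds for any polytope, namely $\sum(\text{area})(\text{normal})=0$, by the divergence theorem applied to constant vector fields. Alternatively, it follows directly from
\[
\sum_j a_j^A x_j^A=|\det A|\,A^{-T}\sum_j a_j x_j=0,
\]
using \eqref{eq:first-moment}. I would present the second computation, since it ties the identity to design exactness.

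The only step that needs care is the cofactor identity $(Ae_1)\times(Ae_2)=\operatorname{cof}(A)(e_1\times e_2)$. It follows from
\[
w\cdot(Ae_1\times Ae_2)=\det(w,Ae_1,Ae_2)=\det A\,\det(A^{-1}w,e_1,e_2)
\]
for every $w$. The remaining concern is orientation when $\det A<0$, and this is resolved by the normal computation above, which already fixes the outward direction.
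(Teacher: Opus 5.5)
Your proof is correct and takes the same route as the paper: you read off the normal of $AF_j$ from $x\cdot x_j=y\cdot A^{-T}x_j$, obtain the area factor $|\det A|\,|A^{-T}x_j|$ from the cofactor identity, and get balance from $\sum_j a_j^Ax_j^A=|\det A|A^{-T}\sum_j a_jx_j=0$. Your triple-product verification of $(Ae_1)\times(Ae_2)=\det(A)A^{-T}(e_1\times e_2)$ and your remark on outward orientation fill in details that the paper leaves implicit.
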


\begin{proof}
The normal to the image of a plane with normal $x_j$ is proportional to $A^{-T}x_j$, which gives \eqref{eq:affine-normal}. The area scaling of a two-dimensional element with unit normal $x_j$ is the cofactor factor $|\det A|\,|A^{-T}x_j|$, proving \eqref{eq:affine-area}. Summation gives \eqref{eq:affine-SA}. Finally,
\[
\sum_{j=1}^Na_j^Ax_j^A
=|\det A|A^{-T}\sum_{j=1}^Na_jx_j=0.
\]
\end{proof}

\begin{remark}[Affine covariance rather than affine invariance]\label{rem:affine-covariance}
Unless $A$ is a scalar multiple of an orthogonal map, the factors $|A^{-T}x_j|$ are not constant. Therefore an equal-area Minkowski polytope is generally transformed into a weighted one, and its normalized facet-normal measure is not an ordinary equal-weight spherical design. The natural affine image is a weighted projective normal configuration. Thus the spherical design property itself is orthogonally invariant, while the surface-area-measure realization is affinely covariant.
\end{remark}

\section{Spectral conditioning and local regularity of facet normals}\label{sec:spectral}

The Minkowski realization uses only polynomial exactness. It does not by itself prevent two facet normals from being extremely close. In the critical interpolation regime, spectral conditioning supplies precisely this missing local control.

Let
\[
d_t=(t+1)^2,
\]
choose an orthonormal basis of $\Pi_t(\Sph^2)$ in $L^2(\sigma)$, and let $y_t(x)\in\R^{d_t}$ be the corresponding evaluation vector. For $X_N=\{x_1,\dots,x_N\}$ define
\begin{equation}\label{eq:gram}
Y_t=[y_t(x_1)\ \cdots\ y_t(x_N)],
\qquad
\widehat H_t:=\frac1N Y_tY_t^T.
\end{equation}
The addition formula gives
\begin{equation}\label{eq:addition}
|y_t(x)|^2=d_t.
\end{equation}

\begin{theorem}[Spectral conditioning separates facet normals]\label{thm:separation}
Let $X_N\subset\Sph^2$ be a spherical $t$-design. Assume $N=d_t$ and that $X_N$ is fundamental for $\Pi_t(\Sph^2)$. Set
\[
a_t:=\lambda_{\min}(\widehat H_t)>0.
\]
Then
\begin{equation}\label{eq:separation}
\min_{i\ne j}d_{\Sph^2}(x_i,x_j)\ge\frac{\sqrt{a_t}}{t}.
\end{equation}
Hence, for the Minkowski polytope $P_X$, distinct facet normals are separated at the wavelength scale whenever $a_t$ is bounded below.
\end{theorem}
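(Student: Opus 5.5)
The plan is to compare two estimates for the distance $|y_t(x_i)-y_t(x_j)|$ in $\R^{d_t}$: a lower bound coming from the spectral hypothesis and an upper bound coming from a Bernstein inequality. First I will convert the assumption on $\widehat H_t$ into a statement about the nodes. Since $N=d_t$, the matrix $Y_t$ is square, and fundamentality makes it invertible. The matrices $\frac1N Y_tY_t^T$ and $\frac1N Y_t^TY_t$ have the same spectrum, so
\[
\frac1N\,|Y_tc|^2\ \ge\ a_t|c|^2\qquad\text{for every }c\in\R^N.
\]
Take $c=e_i-e_j$ with $i\ne j$. Then $Y_tc=y_t(x_i)-y_t(x_j)$ and $|c|^2=2$, so
\[
|y_t(x_i)-y_t(x_j)|^2\ \ge\ 2Na_t=2d_ta_t .
\]
This step is pure linear algebra. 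The design property plays no role in it.

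Next I will bound $|y_t(x)-y_t(z)|$ from above by a multiple of $d_{\Sph^2}(x,z)$. By duality,
\[
|y_t(x)-y_t(z)|=\sup_{|c|=1}|p_c(x)-p_c(z)|,\qquad p_c:=c\cdot y_t\in\Pi_t(\Sph^2).
\]
Because the basis is orthonormal, $\|p_c\|_{L^2(\sigma)}=|c|=1$. Cauchy--Schwarz together with the addition formula \eqref{eq:addition} gives
\[
|p_c(u)|\le|c|\,|y_t(u)|=\sqrt{d_t},\qquad\text{so}\qquad \|p_c\|_\infty\le\sqrt{d_t}.
\]
Now restrict $p_c$ to the great circle through $x$ and $z$, parametrized by arclength. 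The result is a trigonometric polynomial of degree at most $t$. The classical Bernstein inequality then bounds its derivative by $t\|p_c\|_\infty\le t\sqrt{d_t}$. Integrating along the shorter arc gives
\[
|p_c(x)-p_c(z)|\le t\sqrt{d_t}\,d_{\Sph^2}(x,z),
\qquad\text{hence}\qquad
|y_t(x)-y_t(z)|\le t\sqrt{d_t}\,d_{\Sph^2}(x,z).
\]

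Combining the two bounds at $x=x_i$, $z=x_j$ yields
\[
2d_ta_t\le t^2d_t\,d_{\Sph^2}(x_i,x_j)^2 .
\]
Therefore $d_{\Sph^2}(x_i,x_j)\ge\sqrt{2a_t}/t\ge\sqrt{a_t}/t$, which is \eqref{eq:separation}. The statement about $P_X$ then follows directly from \Cref{thm:realization}, since the facet normals of $P_X$ are exactly the nodes $x_j$.

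The main point to check carefully is the Bernstein step. I have to confirm that the restriction of an element of $\Pi_t(\Sph^2)$ to a great circle really is a trigonometric polynomial of degree at most $t$. This holds because each coordinate is a linear combination of $\cos s$ and $\sin s$ along the circle, so a polynomial of total degree at most $t$ becomes a trigonometric polynomial of degree at most $t$. The sup-norm bound via the addition formula is the other ingredient that has to be stated cleanly. Everything else is routine.
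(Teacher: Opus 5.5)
Your proof is correct, and it reaches the bound by a route dual to the paper's. The ingredients are the same: the spectral lower bound, the addition formula, and Bernstein's inequality on a great circle. They are organized differently.

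The paper works with the Lagrange basis. It uses $\sum_j L_j(x)^2=\frac1N y_t(x)^T\widehat H_t^{-1}y_t(x)\le a_t^{-1}$ to get $\|L_i\|_\infty\le a_t^{-1/2}$. It then applies Bernstein to the single test polynomial $L_i$, which jumps by $1$ between $x_i$ and $x_j$.

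You instead transfer the spectral hypothesis to $\frac1N Y_t^TY_t$. This separates the feature vectors: $|y_t(x_i)-y_t(x_j)|^2\ge 2d_ta_t$. You pair this with a node-independent Lipschitz bound $|y_t(x)-y_t(z)|\le t\sqrt{d_t}\,d_{\Sph^2}(x,z)$ for the feature map. That bound comes from Bernstein applied to every $L^2(\sigma)$-normalized polynomial, with the addition formula supplying the uniform sup-norm bound $\sqrt{d_t}$.

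Your version never builds the cardinal basis. It also gives the slightly sharper constant $\sqrt{2a_t}/t$. The factor $\sqrt2$ comes from testing with $e_i-e_j$. The paper's argument would recover it by applying Bernstein to $L_i-L_j$ instead of $L_i$: this polynomial jumps by $2$ and satisfies $\|L_i-L_j\|_\infty\le\sqrt2\,a_t^{-1/2}$.

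In the other direction, the paper's route yields as a by-product the uniform bound $\sum_j L_j^2\le a_t^{-1}$ on the Lagrange basis. That is a stability estimate for interpolation at the nodes, which your argument does not make explicit.
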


\begin{proof}
Let $L_1,\dots,L_N$ be the Lagrange basis. With $L(x)=(L_1(x),\dots,L_N(x))^T$,
\[
\sum_{j=1}^N L_j(x)^2
=\frac1N y_t(x)^T\widehat H_t^{-1}y_t(x)
\le\frac{d_t}{Na_t}=a_t^{-1}.
\]
Thus $\|L_j\|_\infty\le a_t^{-1/2}$. For $i\ne j$, restrict $L_i$ to the great circle through $x_i$ and $x_j$. It is a trigonometric polynomial of degree at most $t$. Bernstein's inequality gives
\[
1=|L_i(x_i)-L_i(x_j)|
\le t\,d_{\Sph^2}(x_i,x_j)\|L_i\|_\infty
\le t\,d_{\Sph^2}(x_i,x_j)a_t^{-1/2}.
\]
Rearranging proves \eqref{eq:separation}.
\end{proof}

\begin{corollary}[Condition-number certificate]\label{cor:condition-number}
Under the assumptions of \Cref{thm:separation}, if $\kappa_2(\widehat H_t)\le K$, then
\begin{equation}\label{eq:condition-separation}
\min_{i\ne j}d_{\Sph^2}(x_i,x_j)\ge\frac1{t\sqrt K}.
\end{equation}
\end{corollary}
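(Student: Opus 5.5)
The corollary follows from \Cref{thm:separation} once $\lambda_{\min}(\widehat H_t)$ is bounded below in terms of $\kappa_2(\widehat H_t)$. Since $\widehat H_t$ is symmetric positive semidefinite and, by fundamentality, invertible, we have $\kappa_2(\widehat H_t)=\lambda_{\max}/\lambda_{\min}$. So it suffices to show $\lambda_{\max}(\widehat H_t)\ge1$, which gives $a_t=\lambda_{\min}\ge\lambda_{\max}/K\ge 1/K$. Substituting this into \eqref{eq:separation} yields $\min_{i\ne j}d_{\Sph^2}(x_i,x_j)\ge\sqrt{a_t}/t\ge 1/(t\sqrt K)$, which is \eqref{eq:condition-separation}.

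For the lower bound on $\lambda_{\max}$, I would take the trace and use the addition formula \eqref{eq:addition}:
\[
\tr\widehat H_t=\frac1N\sum_{j=1}^N|y_t(x_j)|^2=d_t .
\]
Since $\widehat H_t$ is a $d_t\times d_t$ matrix, the average of its eigenvalues is $1$, so $\lambda_{\max}\ge1\ge\lambda_{\min}$. This argument uses neither the design property nor $N=d_t$ beyond what \Cref{thm:separation} already requires.

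An alternative route uses the design property in degree $\le t$. When the orthonormal basis contains the constant function $1$, the vector $e$ with $e^T y_t(x)=1$ gives $e^T\widehat H_t e=\frac1N\sum_j 1=1$, so again $\lambda_{\max}\ge1$. The trace argument is cleaner and I would present that one.

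I do not expect a real obstacle. The only point needing care is that $\kappa_2$ means the ratio of extreme eigenvalues, which holds because $\widehat H_t$ is symmetric positive definite, so its singular values are its eigenvalues.
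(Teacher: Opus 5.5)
Your argument is correct and is the same as the paper's: the addition formula gives $\tr\widehat H_t=d_t$ for a $d_t\times d_t$ matrix, so $\lambda_{\max}(\widehat H_t)\ge1$, hence $\lambda_{\min}(\widehat H_t)\ge 1/K$, and \Cref{thm:separation} then yields \eqref{eq:condition-separation}. One small aside: your alternative route via the constant function does not actually use the design property, since $e^T\widehat H_t e=\frac1N\sum_j 1=1$ holds for any node set.
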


\begin{proof}
Since $\tr\widehat H_t=d_t=N$, the largest eigenvalue is at least $1$. Therefore $\lambda_{\min}\ge1/K$, and \Cref{thm:separation} applies.
\end{proof}

\begin{remark}[What spectral conditioning does and does not control]\label{rem:spectral-limits}
\Cref{thm:separation} regularizes the distribution of facet normals. Together with standard covering estimates for near-minimal spherical designs, it produces a quasi-uniform normal mesh; compare Sol\'e~\cite{Sole1991}, Brauchart--Dick--Saff--Sloan--Wang--Womersley~\cite{Brauchart2015}, and An--Chen--Sloan--Womersley~\cite{An2010}. However, a quasi-uniform set of facet normals does not by itself improve the universal $O(t^{-1/2})$ Hausdorff rate or give direct bounds for support numbers and facet aspect ratios. Any stronger shape estimate must use additional structure in the inverse Minkowski problem.
\end{remark}

\subsection{Can spectral conditioning improve the shape rate?}
The estimate of \Cref{thm:quantitative-sphericity} is universal and does not use spectral conditioning. In the critical square case $N=(t+1)^2$, a uniform spectral lower bound gives additional local geometry. Specifically,
\[
\lambda_{\min}(\widehat H_t)\ge a_0>0
\quad\Longrightarrow\quad
\delta_X\ge\frac{\sqrt{a_0}}{t}.
\]
Thus well-conditioned critical designs have facet normals separated at the natural wavelength scale. Together with known covering estimates, this produces quasi-uniform normal meshes. This information is strictly stronger than the dispersion bound $\Theta(\mu_X)\ge1/3$ used in the general inverse Minkowski theorem. It is therefore natural to ask whether the H\"older loss in \Cref{thm:quantitative-sphericity} can be reduced on this restricted class.

\begin{conjecture}[Spectrally improved Minkowski rate]\label{conj:spectral-rate}
Let $X_t$ be critical spherical $t$-designs with $N=(t+1)^2$ and assume
\[
\lambda_{\min}(\widehat H_t)\ge a_0>0
\]
uniformly in $t$. Let $P_t$ be the Steiner-normalized Minkowski polytopes. Then there exists $\beta>1/2$ such that
\begin{equation}\label{eq:spectral-conj}
d_H(P_t,B)\le Ct^{-\beta}.
\end{equation}
A natural first target is the wavelength-scale rate $\beta=1$.
\end{conjecture}

The conjecture cannot be obtained merely by strengthening a Marcinkiewicz--Zygmund inequality for arbitrary $L^\infty$ functions. The required improvement must enter the inverse Minkowski step, for example through quantitative control of the normal fan, regularity of the support numbers, or a stability theorem adapted to quasi-uniform atomic surface area measures. In this formulation, the open problem is a refinement of the convex geometry of the structured atomic class generated by spherical designs.

\section{Higher-dimensional extension and open problems}\label{sec:higher}

The same geometric mechanism extends directly to $\Sph^d\subset\R^{d+1}$. Let $\sigma_d$ be normalized area measure and let $\omega_d=|\Sph^d|$.

\begin{theorem}[Higher-dimensional realization and quantitative sphericity]\label{thm:higher}
Let $X_N\subset\Sph^d$ be a spherical $t$-design with $t\ge2$. Then there exists a full-dimensional convex polytope $P_X\subset\R^{d+1}$, unique up to translation, such that
\begin{equation}\label{eq:higher-SA}
S_{P_X}=\frac{\omega_d}{N}\sum_{x\in X_N}\delta_x.
\end{equation}
All facets have equal $d$-dimensional area. Moreover, after fixing translations by the Steiner point,
\begin{equation}\label{eq:higher-H}
d_H(P_X,B^{d+1})\le C_d(t+1)^{-1/d}.
\end{equation}
The Fraenkel asymmetry satisfies
\begin{equation}\label{eq:higher-F}
\alpha(P_X,B^{d+1})\le C_d(t+1)^{-(d+1)/(2d)}.
\end{equation}
\end{theorem}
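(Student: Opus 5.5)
The plan is to repeat the three-dimensional argument, replacing $\R^3$ by $\R^{d+1}$ and \Cref{thm:BMR} with $n=3$ by the same theorem with $n=d+1$. All dimension-dependent constants become $C_d$.

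\textbf{Realization.} The design identities give first moment $0$ and second moment $\frac{1}{d+1}I_{d+1}$. The second moment follows from rotation invariance together with $\tr(uu^T)=1$. As in \Cref{thm:realization}, if the nodes lay on a great subsphere $a^\perp\cap\Sph^d$, the second-moment identity would give $0=|a|^2/(d+1)$, which is a contradiction. Minkowski's theorem in $\R^{d+1}$ (Schneider, Section~8.2) then yields a polytope $P_X$, unique up to translation, with $S_{P_X}=\frac{\omega_d}{N}\sum\delta_x$. Each atom is one facet of $d$-dimensional area $\omega_d/N$.

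\textbf{Quantitative input.} I will use the Jackson inequality on $\Sph^d$: $E_t(f)_\infty\le C_{J,d}\Lip(f)/(t+1)$ (Dai--Xu). Kantorovich--Rubinstein duality, exactly as in \Cref{thm:w1}, gives $W_1(\mu_X,\sigma_d)\le 2C_{J,d}/(t+1)$, and hence $d_C\le W_1$.

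The dispersion bound follows as in \Cref{prop:dispersion}: $|s|\ge s^2$ gives $\Theta(\mu_X)\ge 1/(d+1)$. For $\sigma_d$, $\Theta(\sigma_d)$ is a positive constant depending on $d$; it is at least $1/(d+1)$ by the same inequality.

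\textbf{Normalization.} Since $S_{rK}=r^dS_K$, I take $r_0=\omega_d^{-1/d}$, so that $S_{r_0P_X}=\mu_X$ and $S_{r_0B}=\sigma_d$. Applying \Cref{thm:BMR} with $n=d+1$ and $\vartheta=1/(d+1)$ gives
\[
\inf_a d_H\le C_d\,W_1^{1/d}\le C_d(t+1)^{-1/d}.
\]
Rescaling by $r_0^{-1}$ only changes the constant.

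\textbf{Steiner normalization.} The Steiner point is
\[
s(K)=\frac{d+1}{\omega_d}\int_{\Sph^d} u\,h_K(u)\,d\omega_d(u),
\]
so $|s(K)-s(L)|\le (d+1)\,d_H(K,L)$. The triangle-inequality argument from the proof of \Cref{thm:quantitative-sphericity} then gives
\[
d_H(P_X,B^{d+1})\le (d+2)\inf_a d_H(P_X,a+B^{d+1}),
\]
which proves \eqref{eq:higher-H}.

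\textbf{Fraenkel asymmetry.} By \eqref{eq:BMR-F} with $n=d+1$,
\[
\alpha^2\le C_d\,W_1^{1+1/d}\le C_d(t+1)^{-(d+1)/d}.
\]
Since $\alpha$ is dilation invariant, taking square roots gives \eqref{eq:higher-F}.

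\textbf{Main obstacle.} There is no genuine new difficulty, since \Cref{thm:BMR} is already stated for general $n$. The points that need care are the dimension-dependent inputs: the Jackson constant on $\Sph^d$, the Steiner-point Lipschitz constant, and the scaling exponent $d$ rather than $2$.
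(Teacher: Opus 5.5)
Your proposal is correct and follows the paper's proof step for step. The steps are the same: balance and the isotropic second moment give the Minkowski realization; Jackson approximation on $\Sph^d$ plus Kantorovich--Rubinstein duality gives $W_1(\mu_X,\sigma_d)=O(t^{-1})$; degree-two exactness gives $\Theta(\mu_X)\ge 1/(d+1)$; and \Cref{thm:BMR} with $n=d+1$, applied after the rescaling by $\omega_d^{-1/d}$, gives both rates, with the Steiner point removing the translation. Your explicit checks that $\Theta(\sigma_d)\ge 1/(d+1)$ and that $|s(K)-s(L)|\le (d+1)\,d_H(K,L)$ fill in details the paper leaves implicit.
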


\begin{proof}
Degree-one exactness gives balance, while degree-two exactness gives
\begin{equation}\label{eq:higher-second}
\frac1N\sum_{x\in X_N}xx^T=\frac1{d+1}I_{d+1}.
\end{equation}
Hence the classical Minkowski theorem gives \eqref{eq:higher-SA}. Moreover, for each $\theta\in\Sph^d$,
\[
\int|\theta\cdot u|\,d\mu_X(u)
\ge\int(\theta\cdot u)^2\,d\mu_X(u)
=\frac1{d+1},
\]
so $\Theta(\mu_X)\ge1/(d+1)$. A dimension-dependent spherical Jackson inequality gives
\[
W_1(\mu_X,\sigma_d)\le C_d(t+1)^{-1}.
\]
Set $r_d=\omega_d^{-1/d}$. Since surface area measure in $\R^{d+1}$ scales by $S_{rK}=r^dS_K$, one has $S_{r_dP_X}=\mu_X$ and $S_{r_dB^{d+1}}=\sigma_d$. Apply \Cref{thm:BMR} in ambient dimension $n=d+1$. The Hausdorff exponent is $1/(n-1)=1/d$, yielding \eqref{eq:higher-H}. The Fraenkel estimate has squared exponent $1+1/d$, so after taking square roots one obtains $(d+1)/(2d)$ in \eqref{eq:higher-F}. Steiner normalization removes the remaining translation as in \Cref{thm:quantitative-sphericity}.
\end{proof}

The theorem separates a dimension-stable approximation mechanism from a dimension-dependent nonlinear inversion mechanism:
\[
\boxed{
\text{design exactness}
\Longrightarrow
W_1=O(t^{-1})
\Longrightarrow
 d_H=O(t^{-1/d}).}
\]
The first arrow reflects spherical approximation of the curvature measure; the dimension loss enters only in the second arrow, through inverse Minkowski stability.

Several problems now become sharply formulated.
\begin{enumerate}[label=(\roman*)]
\item \textbf{Sharpness within the design class.} The exponent $1/d$ is sharp for general uniformly dispersed surface area measures, but spherical designs satisfy exact vanishing of all harmonics through degree $t$. Determine the optimal exponent for this special class.
\item \textbf{Spectral improvement.} Determine whether critical well-conditioned designs satisfy the wavelength-scale estimate $d_H(P_X,B)=O(t^{-1})$, or more generally any exponent strictly larger than $1/d$.
\item \textbf{Structured inverse Minkowski stability.} Identify geometric hypotheses on the normal fan, support numbers, or facet regularity that convert wavelength-scale regularity of the normals into a sharper shape rate.
\item \textbf{Harmonic mode transfer.} For smooth regularizations of the atomic design measures, quantify how nonlinear interactions create low-frequency modes beyond the linearized equation $(\Delta+d)\varphi=f$. This is an interpretative refinement of the surface-area viewpoint rather than a prerequisite for the universal shape theorem.
\end{enumerate}

A useful quantitative invariant is the translation-normalized shape deficit
\begin{equation}\label{eq:shape-deficit}
M_t(X):=d_H(P_X-s(P_X),B).
\end{equation}
The present paper proves the universal estimate
\begin{equation}\label{eq:shape-deficit-bound}
M_t(X)\le C(t+1)^{-1/2}
\end{equation}
on $\Sph^2$. Improving this rate under additional geometric or spectral hypotheses is the natural next step.

\section{Conclusion}

This paper develops a quantitative convex-geometric interpretation of spherical designs. Every spherical $t$-design of strength at least two determines a unique translation class of equal-facet-area convex polytopes, and under this correspondence
\[
\frac{S_{P_X}}{4\pi}=\mu_X.
\]
The design equations therefore become exact low-degree moment conditions for a discrete surface-area measure. Their first consequence is not merely qualitative existence: they force the normal geometry of $P_X$ to agree with that of the ball through order $t$. The normalized surface tensors satisfy $M_k(P_X)=M_k(B)$ for $k\le t$, and mixed volumes are exact against polynomial support functions of degree at most $t$. Thus spherical design strength acquires a direct geometric meaning as high-order isotropy of an equal-area polytope.

The same exactness produces universal quantitative shape control. Jackson approximation gives $W_1(\mu_X,\sigma)=O(t^{-1})$, while degree-two isotropy gives the uniform dispersion bound $\Theta(\mu_X)\ge1/3$. Quantitative inverse Minkowski stability then yields
\[
d_H(P_X-s(P_X),B)=O(t^{-1/2})
\]
for every spherical $t$-design on $\Sph^2$, independently of cardinality and conditioning, together with the Fraenkel estimate $\alpha(P_X,B)=O(t^{-3/4})$. For projection bodies, where the surface-area measure enters linearly, the full approximation scale is retained:
\[
d_H(\Pi P_X,\pi B)=O(t^{-1}).
\]
The contrast between these two rates separates the approximation scale of the surface-area data from the loss caused by nonlinear reconstruction of the convex body.

Spectral conditioning supplies a second layer of geometry. In the critical interpolation regime, a uniform lower spectral bound forces facet normals to be separated at the wavelength scale $t^{-1}$. Hence algebraic exactness and spectral regularity play different roles: the former gives universal global sphericity, while the latter controls local geometry. The open problem is to determine whether combining these structures yields a sharper inverse Minkowski rate for the structured atomic measures arising from spherical designs.

The surface-area Monge--Amp\`ere formulation provides a complementary interpretation of this picture. Its linearization at the ball identifies translations and ellipsoidal anisotropy as low harmonic shape modes and explains why design exactness is naturally read as high-order isotropy. The main framework, however, is convex-geometric:
\[
\boxed{
\text{spherical harmonic exactness}
\longrightarrow
\text{exact surface-area geometry}
\longrightarrow
\text{quantitative sphericity}.}
\]
This viewpoint places spherical designs at a natural interface between approximation theory, harmonic analysis, and convex geometry, and suggests a broader program of studying discrete designs through the geometry of the convex bodies determined by their moment measures.

\section*{Statements and Declarations}
\noindent\textbf{Competing interests.} The author declares no competing interests related to this manuscript.

\noindent\textbf{Data availability.} No datasets were generated or analyzed in this theoretical study.

\noindent\textbf{Use of generative AI.} During preparation of this manuscript, the author used Deepseek for language editing, organization, and drafting assistance. The author critically reviewed and revised the resulting text and takes full responsibility for the mathematical content and the final manuscript.

\section*{Acknowledgments}
The author thanks Yeyao Hu and Yingnan Wang for encouragement and helpful discussions.

\end{document}